\documentclass[11pt]{article}  
\usepackage{amsmath}
\usepackage{amssymb}
\usepackage{theorem}
\usepackage{euscript}
\usepackage{pstricks}
\usepackage{authblk}
\usepackage{verbatim}
\usepackage{graphics}
\usepackage{graphicx}
\usepackage{tikz}
\usepackage{amscd}
\usepackage{color}

\usepackage{hyperref}
\hypersetup
{
	colorlinks,
	citecolor=blue,
	filecolor=green,
	linkcolor=green,
	urlcolor=blue
}
\hypersetup{linktocpage}

\newtheorem{theorem}{Theorem}[section]
\newtheorem{lemma}[theorem]{Lemma}

\numberwithin{equation}{section}

\theoremstyle{definition}
\newtheorem{definition}[theorem]{Definition} 
\theoremstyle{remark}

\newcommand{\brac}[1]{\left(#1\right)}

\newcommand{\bk}{{\boldsymbol{k}}}
\newcommand{\bell}{{\boldsymbol{\ell}}}

\newcommand{\bx}{{\boldsymbol{x}}}

\newcommand{\bX}{{\boldsymbol{X}}}
\newcommand{\by}{{\boldsymbol{y}}}

\newcommand{\bW}{{\boldsymbol{W}}}

\newcommand{\balpha}{{\boldsymbol{\alpha}}}

\newcommand{\rd}{{\rm d}} 
\newcommand{\Int}{{\rm Int}}

\def\II{\mathbb I}

\def\ZZd{{\mathbb Z}^d}
\def\IId{{\mathbb I}^d}
\def\ZZ{{\mathbb Z}}

\def\RR{{\mathbb R}}
\def\RRd{{\mathbb R}^d}

\def\NN{{\mathbb N}}
\def\NNd{{\NN}^d}

\def\II{{\mathbb I}}

\def\NN{{\mathbb N}}
\def\RR{{\mathbb R}}

\def\UU{{\mathbb U}}
\def\UUd{{\mathbb U}^d}

\def\IId{{\mathbb I}^d}

\def\NNd{{\mathbb N}^d}
\def\RRd{{\mathbb R}^d}

\def\ZZd{{\mathbb Z}^d}

\def\Hh{{\mathcal H}}

\def\II{{\mathbb I}}

\def\ZZ{{\mathbb Z}}
\def\NN{{\mathbb N}}
\def\RR{{\mathbb R}}

\def\IId{{\mathbb I}^d}

\def\NNd{{\mathbb N}^d}
\def\RRd{{\mathbb R}^d}

\def\Wpmix{W^s_p(\IId)}

\def\Wpgamma{W^s_p(\RRd,\gamma)}
\def\Wap{W^s_p}
\def\Wa{W^s_2(\RRd,\gamma)}

\def\BWpgamma{\bW^s_p(\RRd,\gamma)}

\newcommand{\norm}[2]{\left\|{#1}\right\|_{#2}}

\title{\sffamily {Optimal numerical integration  for functions in fractional Gaussian Sobolev spaces}}

\author{Van Kien Nguyen}
\affil{Department of Mathematical Analysis, University of Transport and Communications
	\protect\\	No.3 Cau Giay Street, Lang  Ward,
	Hanoi, Vietnam
	\protect\\
	Email: kiennv@utc.edu.vn}

\date{\today}
\begin{document}
\maketitle

\begin{abstract}
 This paper investigates the numerical approximation of integrals for functions in fractional Gaussian Sobolev spaces $W^s_{p}(\mathbb{R}^d,\gamma)$ with dominating mixed smoothness defined via kernel related to the fractional Ornstein-Uhlenbeck operator. Building upon quadrature rules for fractional Sobolev spaces on the unit cube $[-\tfrac{1}{2}, \tfrac{1}{2}]^d$, we construct quadrature schemes on $\mathbb{R}^d$ that achieve the same rate of convergence. As a consequence, we establish the optimal asymptotic order of the integration error in the regime $1 < p < \infty$ and $s > \frac{1}{p}$, $s\not \in \mathbb{N}$.
 Furthermore, we show that the fractional Gaussian Sobolev spaces $W^s_{2}(\mathbb{R}^d,\gamma)$ coincide with Hermite spaces $\mathcal{H}^s(\mathbb{R}^d,\gamma)$ characterized by the weighted $\ell_2$-summability of their Fourier-Hermite coefficients. From this, we derive the optimal asymptotic order of the integration error for functions in these spaces for all $s > \frac{1}{2}$. We also establish the corresponding optimal asymptotic order for functions in fractional Gaussian Sobolev spaces $W^s_{p,G}(\mathbb{R}^d,\gamma)$ defined via the Gagliardo seminorm.
	
	\medskip
	\noindent
	{\bf Keywords and Phrases}: Multivariate numerical integration; Fractional Gaussian Sobolev space with mixed smoothness; Asymptotic order of convergence. 
	
	\medskip
	\noindent
	{\bf MSC (2020)}:   65D30; 65D32; 41A25; 41A46.
	
\end{abstract}

\section{Introduction}
 \label{Introduction}
Let $\rd\gamma (\bx) = \rho(\bx) \rd\bx$  be the $d$-dimensional standard Gaussian measure on $\RRd$  with the density 
$$
\rho(\bx):=(2\pi)^{-d/2} \exp\brac{-|\bx|^2/2},\ \ \bx\in \RRd.
$$
In this paper we investigate numerical approximation of the integrals 
\begin{equation} \label{I(f)}
	I^\gamma(f):=\int_{\RRd} f(\bx) \, \rd \gamma(\bx) = \int_{\RRd} f(\bx) \rho(\bx) \, \rd\bx
\end{equation}
for functions $f$ belonging  to the fractional Gaussian Sobolev spaces  with dominating mixed smoothness
$W^s_{p}(\RRd,\gamma)$ or $W^s_{p,G}(\mathbb{R}^d, \gamma)$, where $1 < p < \infty$, $s>\frac{1}{p}$, $s\not \in\NN$  (see Section \ref{Numerical integration} for the definition). We denote these spaces by $W^s_{p,\gamma}$. 
To approximate this integral, we employ a (linear) quadrature scheme of the form
\begin{equation} \label{I_n(f)-introduction}
	I_n(f) := \sum_{i=1}^n \lambda_{i,n}  f(\bx_{i,n}),\ \ n\in \NN,
\end{equation}
with  the convention $I_0(f) = 0$, where $\{\bx_{1,n},\ldots,\bx_{n,n}\}\subset \RRd$  are given integration nodes and $\{\lambda_{1,n},\ldots,\lambda_{n,n}\}$ are the corresponding weights. For convenience, we permit the integration nodes to coincide.
Let $\boldsymbol{W}^s_{p,\gamma}$ be the unit ball of $W^s_{p,\gamma}$. The optimality  of  quadratures  for  
$W^s_{p,\gamma}$ is measured by the quantity
\begin{equation*} \label{Int_n}
\Int_n(\boldsymbol{W}^s_{p,\gamma}) :=\inf_{I_n}\sup_{f\in \boldsymbol{W}^s_{p,\gamma}}|I^\gamma(f)-I_n(f)|,
\end{equation*}
where the infimum is taken over all quadratures of the form \eqref{I_n(f)-introduction}.   

We are interested in the asymptotic order of this quantity as 
$n\to \infty$, as well as in the construction of asymptotically optimal quadrature rules. In this paper, we do not address the dependence on the dimension or issues of tractability.
The problem of multivariate numerical integration \eqref{I(f)}--\eqref{I_n(f)-introduction} has been studied in \cite{IKLP2015, IL2015, DILP18} for functions in certain Hermite spaces. In particular, the space 
$\Hh_{d,s}$,  $s\in \NN$,  considered in \cite{DILP18} coincides with 
$W^s_2(\mathbb{R}^d, \gamma)$,  in the sense of  equivalent norms.
For $s\in \NN$, it was shown in \cite{DILP18} that 
	\begin{equation*}\label{DILP18}
n^{-s} (\log n)^{\frac{d-1}{2}} 
\ll	
\Int_n\big( \bW^s_2(\RRd, \gamma)\big)  
\ll 
n^{-s} (\log n)^{\frac{d(2s + 3)}{4} - \frac{1}{2}}.
\end{equation*}
The lower bound is obtained by constructing a fooling function in 
$\in \boldsymbol{W}^s_{2}(\RRd,\gamma)$
whose support does not contain any of the integration nodes.
The upper bound is achieved by a translated and scaled quasi-Monte Carlo quadrature rule, based on Dick’s higher order digital nets, mapping the unit cube $[0,1]^d$ to  $[-b,b]^d$ with $b=2\sqrt{s\log n}$. 

The above result was subsequently improved by Dinh D\~ung and the author in \cite{DN23} for $s\in \NN$ and $1<p<\infty$
\begin{equation}\label{DK}
\Int_n\big( \bW^s_p(\RRd, \gamma)\big)  
 \asymp 	n^{-s} (\log n)^{\frac{d-1}{2}} .
\end{equation}
To derive the upper bound, we propose in \cite{DN23} a novel approach in which an asymptotically optimal quadrature rule on the unit $d$-cube is extended to its integer translates and subsequently assembled into a global quadrature rule on $\mathbb{R}^d$. As a consequence, we show that the asymptotic integration error for Gaussian Sobolev spaces is of the same order as that for Sobolev spaces on the unit cube, as given in \eqref{DK}.

The aim of this paper is to extend the result in \eqref{DK} to fractional Gaussian Sobolev spaces with dominating mixed smoothness $W^s_{p,\gamma}$ in the regime $1 < p < \infty$, $s > \frac{1}{p}$, and $s \notin \mathbb{N}$. More precisely, under these conditions, we construct an asymptotically optimal quadrature rule $I_n$ of the form \eqref{I_n(f)-introduction}, which achieves the asymptotically optimal order of convergence
\begin{equation*} 	\label{AsympQuadrature}
\Int_n\big(\boldsymbol{W}^s_{p}(\RRd,\gamma)\big) 
	\asymp
	n^{-s} (\log n)^{(d-1)\big(1-\frac{1}{p}\big)}.
\end{equation*}

In this paper, we further show that the Gaussian Sobolev spaces $W^s_{2}(\mathbb{R}^d,\gamma)$ can be identified with Hermite spaces $\mathcal{H}^s(\RRd,\gamma)$ defined via the weighted $\ell_2$-summability of their Fourier-Hermite coefficients. It then follows that for all
$s>\frac{1}{2}$ we have
\begin{equation*} 
	 	\Int_n\big(\boldsymbol{\mathcal{H}}^s(\RRd,\gamma)\big)\asymp n^{-s} (\log n)^{\frac{d-1}{2}} . 
\end{equation*}
Here $\boldsymbol{\mathcal{H}}^s(\RRd,\gamma)$ is the unit ball of  $\mathcal{H}^s(\mathbb{R}^d,\gamma)$. This result provides a complete characterization of the asymptotic convergence rate of the integration error for functions in the Hermite space $\mathcal{H}^s(\mathbb{R}^d,\gamma)$.

Moreover, this paper establishes the optimal convergence rate of the integration error for functions in fractional Gaussian Sobolev spaces $W^s_{p,G}(\mathbb{R}^d,\gamma)$ defined via the Gagliardo seminorm in the regime $2 < p < \infty$, $s > \frac{1}{p}$, and $s \notin \mathbb{N}$, namely
\begin{equation*}
	\Int_n\big(\boldsymbol{W}^s_{p,G}(\mathbb{R}^d,\gamma)\big)
	\asymp
	n^{-s} (\log n)^{(d-1)\left(1-\frac{1}{p}\right)}.
\end{equation*}

The paper is organized as follows. In Section~\ref{Numerical integration}, we introduce fractional Gaussian Sobolev spaces, discuss their main properties, and prove their relationship with Hermite spaces. Section~\ref{sec:numer} is devoted to the construction of asymptotically optimal quadrature rules for fractional Gaussian Sobolev spaces. In Section~\ref{Sec-4}, we provide numerical experiments supporting the results obtained in Section \ref{sec:numer}.
%%%%%%%%%%%%%%%%%%%%%%%%%%%%%%%%%%

\noindent
{\bf Notation.}  
The letter $d$ is always reserved for
the underlying dimension of $\RR^d$, $\NN^d$, etc. Vectors in $\RRd$  are denoted by boldface
letters. For $\bx \in \RR^d$, $x_i$ denotes the $i$th coordinate, i.e., $\bx := (x_1,\ldots, x_d)$.  If $ 1\le p< \infty$, we write
$|\bx|_p := \big(\sum_{i=1}^d |x_i|^p\big)^{1/p}$ and $|\bx|_\infty=\max\{|x_1|,\ldots,|x_d|\}$. When $p=2$ we simply write $|\bx|$. 
For the quantities $A_n$ and $B_n$ depending on 
$n$ in an index set $J$  
we write  $A_n \ll B_n$  
if there exists some constant $C >0$ independent of $n$ such that 
$A_n \leq CB_n$ for all $n \in J$, and  
$A_n \asymp B_n$ if $A_n  \ll B_n $
and $B_n  \ll A_n $. General positive constants or positive constants depending on parameters $s, d,\ldots$ are denoted by $C$ or $C_{s,d,\ldots}$, respectively. Values of constants $C$ and  $C_{s,d}$ may be different in various places.  Denote by $|G|$ the cardinality of the finite set $G$. The unit ball in Banach space $X$ is denoted by the bold symbol $\bX$.

%%%%%%%%%%%%%%%%%%%%%%%%%%%%%%%%%
%%%%%%%%%%%%%%%%%%%%%%%%%%%%%%%%%

\section{Fractional Gaussian Sobolev spaces}
\label{Numerical integration}

\label{Subsec-AssemblingQuadratures}
In this section, we first introduce fractional Gaussian Sobolev spaces with dominating mixed smoothness, extending Sobolev–Slobodeckij spaces of mixed smoothness to the Gaussian setting. We then define these spaces via a kernel associated with the fractional Ornstein–Uhlenbeck operator and discuss their relationship with Hermite spaces.

Let  $1\leq p<\infty$.  
	We define the Gaussian Lebesgue space  $L_p(\RRd,\gamma)$ to be the set of all measurable functions $f$ on $\RRd$ such that the norm
	$$
	\|f\|_{L_p(\RRd,\gamma)} : = \bigg( \int_{\RRd} |f(\bx)|^p \rd \gamma(\bx)\bigg)^{1/p}
	=
	\bigg( \int_{\RRd} |f(\bx)|^p \rho(\bx) \rd \bx\bigg)^{1/p} \ <  \ \infty. 
	$$
	For $s \in \NN$, we define the Gaussian Sobolev  space with mixed smoothness $\Wap(\RRd,\gamma)$ as the normed space of all functions $f\in L_p(\RRd,\gamma)$ such that the  generalized  partial derivatives $D^\balpha f$ of order $\balpha$  belong to $L_p(\RRd,\gamma)$ for all $\balpha\in \NN_0^d$ satisfying $|\balpha|_\infty\leq s$. The norm of a  function $f$ in this space 
	is defined by
	\begin{align} \label{W-Omega}
		\|f\|_{\Wap(\RRd,\gamma)}: = \Bigg(\sum_{|\balpha|_\infty \leq s} \|D^\balpha f\|_{L_p(\RRd,\gamma)}^p\Bigg)^{1/p}.
	\end{align}

Before defining fractional Gaussian Sobolev spaces with dominating mixed smoothness on $\RRd$, we introduce the corresponding fractional Sobolev spaces on $\UU^d$, where $\UU=[a,b]$ or $\UU=\RR$.
As usual,   $L_p(\UUd)$ denotes the Lebesgue space of functions on $\UUd$ equipped with the standard $L_p$-norm. For $s\in \NN$, the  space $\Wap(\UUd)$ is defined analogously to $\Wap(\RRd,\gamma)$ by replacing $L_p(\RRd,\gamma)$ in \eqref{W-Omega} with $L_p(\UUd)$. 

For function $f$ defined on a set $\UUd$ and $\bx,\by\in \UUd$ we denote
$$
\Delta_{y_j,j} f(\bx):=f(x_1,\ldots,x_{j-1},x_j,x_{j+1},\ldots,x_d)-f(x_1,\ldots,x_{j-1},y_j,x_{j+1},\ldots,x_d) .
$$ 
If $f$ is a univariate function and $x,y\in \UU$, we simply write $\Delta_yf(x)$. 
For $e\subset [d]:= \{1,\ldots,d\}$, $e\not=\emptyset$,  we denote   $\by_e=(y_j)_{j\in e}\in \RR^{|e|}$ and
$$
\Delta^e_{\by_e}f(\bx):=\prod_{j\in e} \Delta_{y_j,j} f(\bx), 
$$
where $\Delta_{y_j,j}$ is the univariate operator applied to the $j$-th coordinate of $f$
with the other variables kept fixed. 

For $s>0$ and $s \notin \mathbb{N}$, we write $s = \bar{s} + \tilde{s}$, where $\bar{s} := \lfloor s \rfloor$ is the integer part of $s$ and $\tilde{s}$ is the fractional part. 
\begin{definition}\label{def:Sobolev}
	Let $1\leq p<\infty$, $s>0$ and $s\not \in \NN$. Then the fractional Sobolev space with mixed smoothness $\Wap(\UUd)$ is defined by
	$$
	\Wap(\UUd):=\big\{f\in W_p^{\bar{s}}(\UUd): [f]_{\Wap(\UUd)} <\infty \big\}
	,$$
	where
	$$
	[f]_{\Wap(\UUd)}
	:=\sum_{|\balpha|_\infty = \bar{s}}\sum_{e\subset [d],e\not=\emptyset}
	\Bigg(
	\int_{\UU^{|e|}}\int_{\UUd}
	\frac{
		\big| \Delta^e_{\by_e}D^\balpha f(\bx)  \big|^p
	}{
		\prod_{j\in e}	|x_j - y_j|^{1 + \tilde{s}p}
	}
	\, \rd\bx \, \rd\by_e
	\Bigg)^{\frac{1}{p}}.
	$$
	The norm of $f\in \Wap(\UUd)$ is given by
	$$
	\|f \|_{\Wap(\UUd)}:
	=
	\|f \|_{ W_p^{\bar{s}}(\UUd)}
	+ [f]_{\Wap(\UUd)}.
	$$
\end{definition}
Note that the space $\Wap(\UUd)$ coincides with the so-called Besov space of dominating mixed smoothness, often denoted by $S^s_{p,p}B(\UUd)$, see \cite[Theorems 2.2.6.2 and 2.3.4.1]{ST87B}.    

\begin{definition}\label{def:Sobolev-gamma}
Let $1\leq p<\infty$,  $s>0$ and $s\not \in \NN$. Then the fractional Gaussian Sobolev space with mixed smoothness $W^s_{p,G}(\RRd,\gamma)$ is defined by
	$$
W^s_{p,G}(\RRd,\gamma):=\big\{f\in W_{p}^{\bar{s}}(\RRd,\gamma): [f]_{W^s_{p,G}(\RRd,\gamma)} <\infty \big\}
	,$$
where
\begin{equation*}\label{eq:quasi-norm}
[f]_{W^s_{p,G}(\RRd,\gamma)}
:=
\sum_{|\balpha|_\infty = \bar{s}}\sum_{e\subset [d],e\not=\emptyset}
\Bigg(
\int_{\RR^{|e|}}\int_{\RRd}
\frac{
	\big|\Delta^e_{\by_e}D^\balpha f(\bx)    \big|^p
}{
	\prod_{j\in e}	|x_j - y_j|^{1 + \tilde{s}p}
}
\, \rd\gamma(\bx) \, \rd\gamma(\by_e)
\Bigg)^{\frac{1}{p}}.	 
\end{equation*}
The norm of $f\in W^s_{p,G}(\RRd,\gamma)$ is given by
$$
\|f \|_{W^s_{p,G}(\RRd,\gamma)}:
=
\|f \|_{ W_p^{\bar{s}}(\RRd,\gamma)}
+ [f]_{W^s_{p,G}(\RRd,\gamma)}.
$$
\end{definition}

\begin{lemma}\label{lem:continuous}
Let  $1\leq p<\infty$ and $s>\frac{1}{p}$,  $s\not \in \NN$. Then every $f\in W^s_{p,G}(\RRd,\gamma)$ is continuous on $\RRd$.
\end{lemma}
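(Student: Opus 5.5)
Continuity is a local property, so the plan is to localize to bounded cubes and reduce to the unweighted space $\Wap(\UUd)$ on a cube. There we can invoke the classical embedding of the Besov space of dominating mixed smoothness $S^s_{p,p}B(\UUd)$ into continuous functions for $s>\frac1p$. Concretely, fix $R>0$ and put $Q_R:=[-R,R]^d$. On $Q_R$ the density satisfies $(2\pi)^{-d/2}e^{-dR^2/2}\le \rho(\bx)\le (2\pi)^{-d/2}$. Likewise, for $\by_e\in[-R,R]^{|e|}$ the marginal Gaussian density is bounded below by a positive constant $c_R$. Restricting all integrals in $\|f\|_{W^{\bar s}_p(\RRd,\gamma)}$ and $[f]_{W^s_{p,G}(\RRd,\gamma)}$ to $\bx\in Q_R$ and $\by_e\in[-R,R]^{|e|}$ therefore gives
\begin{equation*}
\|f|_{Q_R}\|_{\Wap(Q_R)}\le C_R\,\|f\|_{W^s_{p,G}(\RRd,\gamma)},
\end{equation*}
where $\Wap(Q_R)$ is the space of Definition \ref{def:Sobolev} with $\UU=[-R,R]$. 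Weak derivatives of $f$ on $\RRd$ restrict to weak derivatives on the interior of $Q_R$, so $f|_{Q_R}\in \Wap(Q_R)$.

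Next we show that every $g\in\Wap(Q_R)$ with $s>\frac1p$ has a continuous representative. By the remark after Definition \ref{def:Sobolev}, $\Wap(Q_R)=S^s_{p,p}B(Q_R)$. The standard embedding $S^s_{p,p}B\hookrightarrow S^{s-1/p}_{\infty,\infty}B$ holds mixed, i.e.\ coordinatewise, so the loss is $1/p$ per direction rather than $d/p$. Since $s-\frac1p>0$, the target consists of bounded continuous functions, so $g$ has a continuous version. If one wants to avoid citing this, the route is to extend $g$ from $Q_R$ to $\RRd$ by a bounded extension operator for dominating mixed Besov spaces on cubes, e.g.\ a tensor product of one-dimensional reflection extensions. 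One then uses the tensor-product (dyadic/Littlewood--Paley) decomposition $g=\sum_{\bk}\delta_{\bk}g$ together with the mixed Nikol'skii inequality $\|\delta_\bk g\|_\infty\le C\,2^{|\bk|_1/p}\|\delta_\bk g\|_p$. This gives $\sum_\bk\|\delta_\bk g\|_\infty\le C\sum_\bk 2^{-|\bk|_1(s-1/p)}\,2^{|\bk|_1 s}\|\delta_\bk g\|_p$, which converges by Hölder because $s>1/p$. The series therefore converges uniformly, and each term is continuous.

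Finally, the continuous representatives on $Q_R$ and $Q_{R'}$ agree almost everywhere on the overlap, hence everywhere on its interior by continuity. So they patch together into a single continuous function on $\RRd$ that equals $f$ almost everywhere, which proves Lemma \ref{lem:continuous}.

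The main obstacle is justifying the identification of $\Wap(Q_R)$ with $S^s_{p,p}B(Q_R)$ and the mixed embedding into $C(Q_R)$, including the extension step, when $s$ is not an integer. A subtle point is that the seminorm in Definition \ref{def:Sobolev} involves only the top derivatives $|\balpha|_\infty=\bar s$ and uses first differences in the fractional part. If needed, I would bypass the Besov machinery with a direct one-dimensional argument iterated over the coordinates. For $d=1$ and $\bar s=0$, the Gagliardo seminorm with $\tilde sp>1$ gives Hölder continuity of order $\tilde s-\frac1p$ via the Campanato/Morrey argument; for $\bar s\ge1$, $f\in W^1_p$ is already absolutely continuous. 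In $d$ dimensions one applies this coordinatewise, using the mixed differences $\Delta^e_{\by_e}$ for all $e$.
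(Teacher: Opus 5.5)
Your proposal is correct and follows essentially the same route as the paper. Both arguments restrict $f$ to a bounded cube, use the positive lower bound of the Gaussian density there to place the restriction in the unweighted space $W^s_p([-\ell,\ell]^d)=S^s_{p,p}B([-\ell,\ell]^d)$, and then invoke the known embedding of this space into continuous functions for $s>\frac{1}{p}$. Your extra details are all sound: the explicit density bounds, the sketch of the embedding via the mixed Nikol'skii inequality, and the patching of the continuous representatives across cubes, a step the paper passes over.
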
  
\begin{proof} Let $f\in W^s_{p,G}(\RRd,\gamma)$. For any $\ell>0$, consider the restriction of $f$ to the cube $[-\ell,\ell]^d\subset \RRd$, denoted by $f_\ell$. Then by Definitions \ref{def:Sobolev} and \ref{def:Sobolev-gamma} we get $f_\ell\in \Wap([-\ell,\ell]^d)$. It is well known that $\Wap([-\ell,\ell]^d)$ is continuously embedded into the space of bounded continuous functions on $[-\ell,\ell]^d$, see, e.g., \cite{HV09}. Hence $f_\ell$ is   continuous on $[-\ell,\ell]^d$. Since $\ell>0$ is arbitrary it follows that 
$f$ is   continuous on $\RRd$.
	\hfill
\end{proof}

%%%%%%%%%%%%%%%%%%%%%%%%

Next, we introduce fractional Gaussian Sobolev spaces defined via kernel 
related to the fractional Ornstein-Uhlenbeck operator. 
In order to do so, we introduce the Ornstein--Uhlenbeck semigroup and its generator $L_\gamma$. Let $t > 0$. For $v \in L_1(\mathbb{R},\gamma)$  the Ornstein--Uhlenbeck semigroup is defined as
	\[
	e^{tL_\gamma} v(x)
	:=
	\int_{\mathbb{R}} M_t(x,y)\, v(y)\, \rd\gamma(y),\ \ x \in \mathbb{R},
	\]
	where $M_t(x,y)$ is the Mehler kernel
	\[
	M_t(x,y)
	:=
	\frac{1}{(1 - e^{-2t})^{1/2}}
	\exp\left(
	- \frac{e^{-2t}|x|^2 - 2e^{-t} x  y + e^{-2t}|y|^2}{2(1 - e^{-2t})}
	\right).
	\]
Ornstein--Uhlenbeck operator in $\RR$ is a second-order differential operator
given by
	\[
	L_\gamma v(x) = \frac{\rd^2}{\rd x^2} v(x) - x\frac{\rd }{\rd x}v(x).
	\]
	For $k\in \NN_0$, the normalized probabilistic Hermite polynomial
	$H_k$ of degree $k$ on $\RR$ is defined by
	\begin{equation}\label{eq:hermite} 
		H_k(x) 
		:= 
		\frac{(-1)^k}{\sqrt{k!}} 
		\exp\left(\frac{x^2}{2}\right) \frac{\rd^k}{\rd x^k} \exp\left(-\frac{x^2}{2}\right) .
	\end{equation}
Then the Hermite polynomial $H_k$
are eigenfunctions of $L_\gamma $  with eigenvalues $k$, i.e.,
$$
L_\gamma H_k(x)=kH_k(x),\ \ \ k\in \NN_0.
$$
We refer the reader to \cite{LMP20} and the references therein for the main properties of $e^{tL_\gamma}$ and $L_\gamma$.
	
For $\sigma \in (0,1)$, we define the fractional Ornstein--Uhlenbeck operator by means of spectral decomposition via the Bochner subordination formula, i.e.,
\begin{equation}\label{eq:OU-operator}
\begin{aligned}
	(-L_\gamma)^\sigma v(x)
	&:= \frac{1}{\Gamma(-\sigma)} \int_0^\infty \frac{e^{tL_\gamma}v(x) - v(x)}{t^{\sigma+1}}\, \rd t \\
	&= \frac{1}{\Gamma(-\sigma)} \int_0^\infty \frac{\rd t}{t^{\sigma+1}} \int_{\mathbb{R}} M_t(x,y)\,(v(y) - v(x))\, \rd\gamma(y) \\
	&= \frac{1}{\Gamma(-\sigma)} \int_{\mathbb{R}} (v(x) - v(y))\, K_{2\sigma}(x,y)\, \rd\gamma(y),
\end{aligned}	 
\end{equation}
where \begin{equation*}
	K_\sigma(x,y) := \int_0^\infty \frac{M_t(x,y)}{t^{\frac{\sigma}{2}+1}}\, \rd t \,.
\end{equation*}
Note that the right-hand side in \eqref{eq:OU-operator} has to be understood in the Cauchy principal value sense. We have that
\begin{equation}\label{eq:eigen}
(-L_\gamma)^\sigma H_k(x)=k^\sigma H_k(x),\ \ k\in\NN,	 
\end{equation}
i.e., the Hermite polynomials $H_k$ are eigenfunctions of $(-L_\gamma)^\sigma$, with corresponding eigenvalues $k^\sigma$. 
\begin{definition}\label{def:Kfunction}
	Let $1\leq p<\infty$,  $s >0$ and $s\not \in \NN$. We define the fractional Gaussian Sobolev space  with mixed smoothness $W_{p}^s(\mathbb{R}^d,\gamma)$ as
	\[
	W_{p}^s(\mathbb{R}^d,\gamma)
	:=
	\left\{
	f \in W^{\bar{s}}_p(\mathbb{R}^d,\gamma)
	\; ; \;
	[ f]_{W_{p}^s(\RRd,\gamma)} < \infty
	\right\},
	\]
	where
\begin{equation}\label{eq:quasi-norm2}
	[ f]_{W_{p}^s(\mathbb{R}^d,\gamma)}
:=
\sum_{|\balpha|_\infty = \bar{s}}\sum_{e\subset [d],e\not=\emptyset}
\Bigg(\int_{\RR^{|e|}}
\int_{\RRd}	
\big| \Delta^e_{\by_e}D^\balpha f(\bx)  \big|^p \bigg(\prod_{i\in e}K_{p\tilde{s}}(x_i,y_i)\bigg)
\rd \gamma(\bx) \, \rd\gamma(\by_e)
\Bigg)^{\frac{1}{p}}.	 
\end{equation} 
The norm of $f\in W_{p}^s(\mathbb{R}^d,\gamma)$ is defined by
$$
\|f\|_{W_{p}^s(\mathbb{R}^d,\gamma)}:=\|f\|_{W^{\bar{s}}_p(\mathbb{R}^d,\gamma)}+	[ f]_{W_{p}^s(\mathbb{R}^d,\gamma)}.
$$
\end{definition}
In the following we collect some properties of the kernel $K_\sigma(x,y)$ which is useful to formulate our results. First, the following lemma was proved in \cite[Lemma 2.8]{CCMP22}.
\begin{lemma}\label{lem:help}For $x,y\in \RR$ and $\sigma>0$ we have
	$$ \frac{2^{\sigma+\frac{1}{2}}\Gamma(\frac{\sigma+1}{2})}{|x-y|^{1+\sigma}} \leq 
	K_\sigma(x,y) \,.
	$$
\end{lemma}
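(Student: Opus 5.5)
The plan is to bound the Mehler kernel $M_t(x,y)$ from below, for every $t>0$, by a pure heat kernel in the variable $x-y$, and then to integrate that lower bound against $t^{-\frac{\sigma}{2}-1}\,\rd t$ explicitly. The substitution $u=|x-y|^2/(4t)$ should turn the integral into a Gamma integral. For $x=y$ both sides are read as $+\infty$ (the integrand defining $K_\sigma(x,x)$ behaves like $t^{-\frac{\sigma}{2}-\frac32}$ near $0$), so we assume $x\neq y$.

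\textbf{Step 1: symmetric form of the exponent.} I would rewrite the quadratic form in the exponent of $M_t$ as
\begin{equation*}
e^{-2t}x^2-2e^{-t}xy+e^{-2t}y^2 = e^{-t}(x-y)^2 - e^{-t}(1-e^{-t})(x^2+y^2).
\end{equation*}
This is checked by expanding the right-hand side. The second term enters the exponent of $M_t$ with a nonnegative sign, so it can be dropped. This gives
\begin{equation*}
M_t(x,y)\ \ge\ (1-e^{-2t})^{-1/2}\exp\Big(-\frac{e^{-t}(x-y)^2}{2(1-e^{-2t})}\Big).
\end{equation*}

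\textbf{Step 2: elementary inequalities in $t$.} Next I would use $1-e^{-2t}\le 2t$, which gives $(1-e^{-2t})^{-1/2}\ge (2t)^{-1/2}$. I would also use $\frac{e^{-t}}{1-e^{-2t}}=\frac{1}{2\sinh t}\le \frac{1}{2t}$. Together these yield
\begin{equation*}
M_t(x,y)\ \ge\ (2t)^{-1/2}\exp\Big(-\frac{(x-y)^2}{4t}\Big),\qquad t>0 .
\end{equation*}

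\textbf{Step 3: integration.} Put $r=|x-y|>0$. Then
\begin{equation*}
K_\sigma(x,y)\ \ge\ 2^{-1/2}\int_0^\infty t^{-\frac{\sigma+3}{2}}e^{-r^2/(4t)}\,\rd t .
\end{equation*}
Substituting $t=r^2/(4u)$, so that $\rd t=-\frac{r^2}{4u^2}\,\rd u$, the integral becomes
\begin{equation*}
\Big(\frac{4}{r^2}\Big)^{\frac{\sigma+1}{2}}\int_0^\infty u^{\frac{\sigma+1}{2}-1}e^{-u}\,\rd u = \frac{2^{\sigma+1}}{r^{1+\sigma}}\,\Gamma\Big(\frac{\sigma+1}{2}\Big).
\end{equation*}
Multiplying by $2^{-1/2}$ gives exactly the constant $2^{\sigma+\frac12}\Gamma(\frac{\sigma+1}{2})$ in the statement.

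The only step needing care is Step 1: finding the decomposition that isolates $(x-y)^2$ with a sign-definite remainder. The rest is routine, and the computation above suggests the constants match exactly.
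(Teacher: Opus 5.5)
Your proof is correct and complete. The identity in Step 1, the bounds $1-e^{-2t}\le 2t$ and $\sinh t\ge t$, and the Gamma-integral computation all check out, and together they give exactly the constant $2^{\sigma+\frac12}\Gamma(\frac{\sigma+1}{2})$.

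The paper does not prove this lemma itself; it cites \cite[Lemma 2.8]{CCMP22}. Your argument is the same mechanism the paper uses for Lemma~\ref{lem:estimate}(2): the rewriting \eqref{eq:Mehler} followed by the substitution $\xi=|x-y|^2/(4t)$. The difference is that you discard the whole Gaussian factor $\exp\big(\frac{e^{-t}(x^2+y^2)}{2(1+e^{-t})}\big)\ge 1$ and use inequalities valid for all $t>0$. That is what gives the sharp explicit constant rather than a constant $C_{t_0}$.
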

We have some further estimates.
\begin{lemma}  \label{lem:estimate}
	Let $\sigma>0$.
	\begin{enumerate}
		\item[(1)] If $x,y\in [-\ell,\ell]$, $\ell>0$, then it holds
		$$
		K_\sigma(x,y) \le \frac{C_\ell}{|x-y|^{1+\sigma}}.
		$$
		\item[(2)] If $|x-y|\leq 1$ and $t_0>0$, then
		$$
		K_\sigma(x,y)\geq C_{t_0} 	\exp\bigg(\frac{x^2+y^2}{4} \Big(1-\frac{ 1-e^{-t_0}}{ 1+e^{-t_0}}\Big)\bigg) \frac{1}{|x-y|^{1+\sigma}},
		$$
where $C_{t_0}$ is a positive constant depending only on $t_0$.
	\end{enumerate}
	
\end{lemma}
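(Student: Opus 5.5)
Both parts come from a direct analysis of the integral $K_\sigma(x,y)=\int_0^\infty t^{-\sigma/2-1}M_t(x,y)\,\rd t$. We rewrite the Mehler kernel in a form that separates the near-diagonal singularity from the Gaussian growth. Put $r=e^{-t}\in(0,1)$. The exponent of $M_t$ is
\[
-\frac{r^2x^2-2rxy+r^2y^2}{2(1-r^2)}
= -\frac{r(x-y)^2}{2(1-r^2)} + \frac{r(1-r)(x^2+y^2)}{2(1-r^2)}
= -\frac{r(x-y)^2}{2(1-r^2)} + \frac{r(x^2+y^2)}{2(1+r)} .
\]
Hence
\[
M_t(x,y)=(1-r^2)^{-1/2}\exp\Big(-\tfrac{r|x-y|^2}{2(1-r^2)}\Big)\exp\Big(\tfrac{r(x^2+y^2)}{2(1+r)}\Big).
\]
The last factor lies between $1$ and $e^{(x^2+y^2)/4}$, because $r/(1+r)\le 1/2$.

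For part (1), take $x,y\in[-\ell,\ell]$. Then $\exp\big(\tfrac{r(x^2+y^2)}{2(1+r)}\big)\le e^{\ell^2/2}$, so it suffices to bound
\[
J:=\int_0^\infty t^{-\sigma/2-1}(1-e^{-2t})^{-1/2}\exp\Big(-\tfrac{e^{-t}|x-y|^2}{2(1-e^{-2t})}\Big)\rd t .
\]
Write $\delta=|x-y|\le 2\ell$ and split the integral at $t=1$.

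On $(0,1]$ we have $1-e^{-2t}\asymp t$ and $e^{-t}\ge e^{-1}$. The integrand is therefore bounded by $C\,t^{-\sigma/2-3/2}e^{-c\delta^2/t}$. Substituting $t=\delta^2 u$ gives
\[
\int_0^1 t^{-\sigma/2-3/2}e^{-c\delta^2/t}\,\rd t \le C\,\delta^{-\sigma-1}\int_0^\infty u^{-\sigma/2-3/2}e^{-c/u}\,\rd u = C\,\delta^{-1-\sigma},
\]
and the $u$-integral is finite.

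On $[1,\infty)$ the integrand is at most $C\,t^{-\sigma/2-1}$, which is integrable. This contributes a constant, and the constant is $\le C_\ell\,\delta^{-1-\sigma}$ since $\delta\le 2\ell$. This proves (1).

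For part (2), restrict the $t$-integral to $(0,t_0]$, which is allowed because the integrand is nonnegative. There $r=e^{-t}\ge e^{-t_0}$, and $r/(1+r)$ is increasing in $r$. Hence
\[
\frac{r}{2(1+r)}\ge \frac{e^{-t_0}}{2(1+e^{-t_0})}=\frac14\Big(1-\frac{1-e^{-t_0}}{1+e^{-t_0}}\Big),
\]
which is exactly the exponent in the statement. So the Gaussian factor can be pulled out with the claimed weight, and it remains to show
\[
\int_0^{t_0} t^{-\sigma/2-1}(1-e^{-2t})^{-1/2}\exp\Big(-\tfrac{e^{-t}\delta^2}{2(1-e^{-2t})}\Big)\rd t \ \ge\ C_{t_0}\,\delta^{-1-\sigma}\qquad (0<\delta\le 1).
\]
Bound the integrand from below on $(0,t_0]$. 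Use $(1-e^{-2t})^{-1/2}\ge (2t)^{-1/2}$. Also $1-e^{-2t}\ge c_{t_0}t$ and $e^{-t}\le 1$, so the exponential is $\ge e^{-\delta^2/(2c_{t_0}t)}$. Then substitute $t=\delta^2u$:
\[
\int_0^{t_0} t^{-\sigma/2-3/2}e^{-\delta^2/(2c_{t_0}t)}\,\rd t=\delta^{-1-\sigma}\int_0^{t_0/\delta^2}u^{-\sigma/2-3/2}e^{-1/(2c_{t_0}u)}\,\rd u .
\]
Since $\delta\le1$, the upper limit is at least $t_0$. The $u$-integral is therefore bounded below by the same integral over $(0,t_0)$, a positive constant depending only on $t_0$ and $\sigma$.

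The main obstacle is part (2): the $t$-cutoff must be chosen so that the Gaussian gain is uniform in $t$ while the singular scaling in $\delta$ is kept. The monotonicity of $r/(1+r)$ together with the restriction $|x-y|\le1$ resolves this.
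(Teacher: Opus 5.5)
Your proof is correct and follows the paper's argument essentially step for step. You use the same factorization of the Mehler kernel: your $\frac{r}{2(1+r)}$ with $r=e^{-t}$ is the paper's $\frac14\big(1-\frac{1-e^{-t}}{1+e^{-t}}\big)$. For (1) you split at $t=1$ and make a scaling substitution; for (2) you restrict to $(0,t_0]$, use monotonicity in $t$ to extract the Gaussian factor, and use $|x-y|\le 1$ to bound the rescaled integral from below, exactly as the paper does. The differences are cosmetic: you use explicit comparisons $1-e^{-2t}\asymp t$ on bounded $t$-intervals where the paper appeals to the asymptotics as $t\to 0$, and you substitute $t=\delta^2u$ rather than $\xi=|x-y|^2/(4t)$.
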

\begin{proof}
	By the definition of Mehler kernel we have
	\begin{equation}\label{eq:Mehler}
		\begin{aligned}
			M_t(x,y)
			&=	\exp\bigg(\frac{x^2+y^2}{4}-\frac{(x^2+y^2)(1-e^{-t})}{4(1+e^{-t})}\bigg)\frac{1}{(1 - e^{-2t})^{1/2}}
			\exp\left(
			- \frac{e^{-t}(x-y)^2}{2(1 - e^{-2t})}
			\right).
		\end{aligned}
	\end{equation}
	Observe that
	\[
	\frac{1}{(1 - e^{-2t})^{1/2}}
	\exp\left(
	- \frac{e^{-t}(x-y)^2}{2(1 - e^{-2t})}
	\right)
	\sim \frac{1}{(2 t)^{1/2}} \exp\left(
	- \frac{(x-y)^2}{4t}
	\right), \ \ t\to 0.
	\]
	From this and
	$$
\frac{1}{(1 - e^{-2t})^{1/2}}
\exp\left(
- \frac{e^{-t}(x-y)^2}{2(1 - e^{-2t})}
\right) \leq C
	$$
when $t\geq 1$
	we get for $x,y\in [-\ell,\ell]$
	\begin{equation*}
		\begin{aligned}
			{K}_\sigma(x,y) &\leq C \exp\Big(\frac{x^2+y^2}{4}\Big)\bigg(\int_{1}^{\infty}  
			\frac{\rd t}{t^{\frac{\sigma}{2}+1}}, + \int_0^{\infty}\frac{\exp\big(
				- \frac{(x-y)^2}{4t}
				\big)}{(2 t)^{1/2}t^{\frac{\sigma}{2}+1}}\rd t
		\bigg)
		\\
		&\leq C_\ell\bigg(1+\int_0^{\infty}\frac{\exp\big(
			- \frac{(x-y)^2}{4t}
			\big)}{ t^{\frac{\sigma}{2}+\frac{3}{2}}}\rd t\bigg) .
		\end{aligned}
	\end{equation*}
Performing change of variables in the second term on the right-hand side we get
$$
	{K}_\sigma(x,y) \leq \frac{C_\ell}{|x-y|^{1+\sigma}}.
$$This proves the first statement. For the second one, from \eqref{eq:Mehler} if $t\in (0,t_0)$ and $|x-y|\leq 1$, then we have
	\begin{equation*}
		\begin{aligned}
			M_t(x,y)
			&\geq 	\exp\bigg(\frac{x^2+y^2}{4}-\frac{(x^2+y^2)(1-e^{-t_0})}{4(1+e^{-t_0})}\bigg)\frac{1}{(1 - e^{-2t})^{1/2}}
			\exp\left(
			- \frac{e^{-t}(x-y)^2}{2(1 - e^{-2t})}
			\right)
			\\
			&= 	\exp\bigg(\frac{x^2+y^2}{4} \Big(1-\frac{ 1-e^{-t_0}}{ 1+e^{-t_0}}\Big)\bigg)\frac{1}{(1 - e^{-2t})^{1/2}}
			\exp\left(
			- \frac{e^{-t}(x-y)^2}{2(1 - e^{-2t})}
			\right)
			\\
			&\geq C_{t_0} 	\exp\bigg(\frac{x^2+y^2}{4} \Big(1-\frac{ 1-e^{-t_0}}{ 1+e^{-t_0}}\Big)\bigg)\frac{1}{(2t)^{1/2}}
			\exp\left(
			- \frac{(x-y)^2}{4t}
			\right).
		\end{aligned}
	\end{equation*}
Note that, the constant $C_{t_0}$ does not depend on $x,y$ since $|x-y|\leq 1$.	Therefore, we obtain
	\begin{equation*}
		\begin{aligned}
			K_\sigma(x,y)&\geq  \int_0^{t_0} \frac{M_t(x,y)}{t^{\frac{\sigma}{2}+1}} \rd t \\
			&\geq  C_{t_0} 	\exp\bigg(\frac{x^2+y^2}{4} \Big(1-\frac{ 1-e^{-t_0}}{ 1+e^{-t_0}}\Big)\bigg) \int_0^{t_0}\frac{1}{t^{\sigma/2+3/2}}
			\exp\left(
			- \frac{(x-y)^2}{4t}\right)\rd t.
		\end{aligned}
	\end{equation*}
Performing a change of variables and using $|x-y|\leq 1$ we get
\begin{equation*}
	\begin{aligned}
\int_0^{t_0}\frac{1}{t^{\sigma/2+3/2}}
\exp\left(
- \frac{(x-y)^2}{4t}\right)\rd t &= \frac{2^{\sigma+1}}{|x-y|^{1+\sigma}}\int_{\frac{|x-y|^2}{4t_0}}^{\infty}e^{-\xi} \xi^{\frac{\sigma}{2}-\frac{1}{2}}\rd\xi
		\\
		&\geq \frac{2^{\sigma+1}}{|x-y|^{1+\sigma}}\int_{\frac{1}{4t_0}}^{\infty}e^{-\xi} \xi^{\frac{\sigma}{2}-\frac{1}{2}}\rd\xi.	 
	\end{aligned}
\end{equation*}
It follows that
	\begin{equation*}
	\begin{aligned}
		K_\sigma(x,y) 	&\geq  C_{t_0} 	\exp\bigg(\frac{x^2+y^2}{4} \Big(1-\frac{ 1-e^{-t_0}}{ 1+e^{-t_0}}\Big)\bigg) \frac{1}{|x-y|^{1+\sigma}}.
	\end{aligned}
\end{equation*}
	The proof is completed.
	\hfill
\end{proof}

By Lemmas \ref{lem:continuous} and \ref{lem:help} we obtain the following. 
\begin{lemma}\label{lem:embedding}
Let $1\leq p<\infty$ and $s>0$. If $s\not \in \NN$, then  
$W_{p}^s(\mathbb{R}^d,\gamma)$ is continuously embedded into  $W^s_{p,G}(\RRd,\gamma)$. Consequently, $f\in W^s_{p}(\RRd,\gamma)$ with either $s>\frac{1}{p}$ or $s=p=1$ is continuous on $\RRd$.
\end{lemma}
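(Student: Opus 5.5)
The plan is to compare the two seminorms term by term, using the pointwise lower bound for the kernel from Lemma~\ref{lem:help}. Both spaces share the same integer-order part $\|f\|_{W^{\bar s}_p(\RRd,\gamma)}$, so only the fractional seminorms need to be compared.

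Fix $\balpha$ with $|\balpha|_\infty=\bar s$ and a nonempty $e\subset[d]$. Applying Lemma~\ref{lem:help} with $\sigma=p\tilde s>0$ gives, for every $i\in e$,
$$
\frac{1}{|x_i-y_i|^{1+\tilde s p}}\le c_{p,s}^{-1}K_{p\tilde s}(x_i,y_i),
\qquad c_{p,s}:=2^{p\tilde s+\frac12}\Gamma\Big(\frac{p\tilde s+1}{2}\Big)>0 .
$$
Taking the product over $i\in e$ bounds the integrand of the $(\balpha,e)$-term of $[f]_{W^s_{p,G}(\RRd,\gamma)}$ pointwise by $c_{p,s}^{-|e|}$ times the integrand of the corresponding term of $[f]_{W^s_p(\RRd,\gamma)}$ in \eqref{eq:quasi-norm2}. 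Both integrals are taken against the same measure $\rd\gamma(\bx)\,\rd\gamma(\by_e)$. Integrating, taking $p$-th roots and summing over the finitely many pairs $(\balpha,e)$ yields
$$
[f]_{W^s_{p,G}(\RRd,\gamma)}\le C_{p,s,d}\,[f]_{W^s_p(\RRd,\gamma)}.
$$
Adding the common $W^{\bar s}_p(\RRd,\gamma)$ norm gives $\|f\|_{W^s_{p,G}(\RRd,\gamma)}\le C\|f\|_{W^s_p(\RRd,\gamma)}$, which is the continuous embedding.

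For the continuity claim when $s>\frac1p$ and $s\notin\NN$, the embedding gives $f\in W^s_{p,G}(\RRd,\gamma)$, and Lemma~\ref{lem:continuous} then applies directly.

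The remaining case $s=p=1$ is the only point needing separate care, since $s\in\NN$ and $W^1_1(\RRd,\gamma)$ is the classical mixed space from \eqref{W-Omega}. I would localize exactly as in Lemma~\ref{lem:continuous}. On $[-\ell,\ell]^d$ the density $\rho$ is bounded below by a positive constant depending on $\ell$, so the restriction $f_\ell$ belongs to the unweighted space $W^1_1([-\ell,\ell]^d)$ with dominating mixed smoothness. For such functions every mixed derivative $D^{\mathbf 1_e}f$ with $e\subset[d]$ lies in $L_1$. Hence, after modification on a null set, $f_\ell$ admits the representation
$$
f(\bx)=\sum_{e\subset[d]}\int_{\prod_{j\in e}[a_j,x_j]}D^{\mathbf 1_e}f(\by_e,\boldsymbol a_{[d]\setminus e})\,\rd\by_e
$$
for a suitably chosen base point $\boldsymbol a$. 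Each term is continuous by absolute continuity of the Lebesgue integral, so $f$ is continuous.

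The main technical obstacle is justifying this representation. The traces $D^{\mathbf 1_e}f(\cdot,\boldsymbol a_{[d]\setminus e})$ must be well defined and integrable for almost every base point $\boldsymbol a$. I would obtain this by a Fubini argument combined with the one-dimensional fact that $W^1_1$ functions are absolutely continuous, applied coordinatewise and by induction on $d$. Equivalently, one can quote the standard embedding of $W^1_1$ with mixed smoothness into bounded continuous functions.
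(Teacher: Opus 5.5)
Your proof is correct and follows the paper's route. The paper derives the lemma from Lemma~\ref{lem:help}, applied with $\sigma=p\tilde s$ to compare the two seminorms termwise against the same measure $\rd\gamma(\bx)\,\rd\gamma(\by_e)$, and from Lemma~\ref{lem:continuous} for continuity when $s>\frac{1}{p}$, $s\notin\NN$. Your separate localization argument for $s=p=1$ is a sound addition, since that case has $s\in\NN$ and is not covered by the two lemmas the paper cites.
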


We now turn to the definition of the Hermite spaces.  The $d$-variate Hermite
polynomial $H_\bk$ for   $\bk\in \NNd_0$ is defined by
\begin{equation*}\label{H_bk}
	H_\bk(\bx) :=\prod_{j=1}^d H_{k_j}(x_j),
	\;\; \bx\in \RRd.
\end{equation*}
It is well known that the Hermite polynomials $(H_\bk)_{\bk \in \NNd_0}$ form an orthonormal basis of the Hilbert space $L_2(\RRd,\gamma)$ (see, e.g.,  \cite[Section 5.5]{Szego1939}). In particular,  every function $f \in L_2(\RRd,\gamma)$ admits a representation in terms of its Hermite series
\begin{equation}\label{H-series}
	f = \sum_{\bk \in \NNd_0} \widehat{f}(\bk) H_\bk \ \ {\rm with} \ \ \widehat{f}(\bk) := \int_{\RRd} f(\bx)\, H_\bk(\bx)\rd \gamma(\bx) 
\end{equation}
with convergence in the norm of $L_2(\RRd,\gamma)$ and there holds the Parseval's identity
\begin{equation*}\label{P-id}
	\norm{f}{L_2(\RRd,\gamma)}^2= \sum_{\bk \in \NNd_0} |\widehat{f}(\bk)|^2.
\end{equation*}
\begin{definition}
Let $s > 0$. The space $\mathcal{H}^s(\mathbb{R}^d,\gamma)$ consists of all functions $f \in L_2(\mathbb{R}^d,\gamma)$ that admit a representation in terms of the Hermite series \eqref{H-series} and for which the norm
\begin{equation*}
	\|f\|_{\mathcal{H}^s(\mathbb{R}^d,\gamma)} := \Bigg(\sum_{\bk \in \mathbb{N}_0^d} \bigg(\prod_{j=1}^d (k_j + 1)^s \bigg)  |\widehat{f}(\bk)|^2 \Bigg)^{1/2}<\infty
\end{equation*}
\end{definition}
It has been proved in \cite{DILP18} that for $s \in \NN$ we have
	\begin{equation}\label{N-eq}
		\norm{f}{\Wa}^2 \asymp \sum_{\bk \in \NNd_0} \prod_{j=1}^d \brac{k_j + 1}^s|\widehat{f}(\bk)|^2, \quad    f \in \Wa.
	\end{equation}
Hence, for $s \in \mathbb{N}$, the spaces $W^s_2(\mathbb{R}^d,\gamma)$ and $\mathcal{H}^s(\mathbb{R}^d,\gamma)$ can be identified in the sense of equivalent norms. This identification is extended to the case $s \notin \mathbb{N}$ in the following lemma.
%%%%%%%%%%%%%%%%%%%%%%%%%%%%

\begin{lemma}\label{lem:embedding2}
Let $s>0$. Then $$	W_{2}^s(\mathbb{R}^d,\gamma)=	\mathcal{H}^s(\mathbb{R}^d,\gamma)$$ in the sense of equivalent norms.
\end{lemma}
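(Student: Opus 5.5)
The case $s\in\NN$ is exactly \eqref{N-eq}, so I only treat $s\notin\NN$, writing $s=\bar s+\tilde s$ with $\tilde s\in(0,1)$. The approach is spectral: I express every term of $\|f\|_{W^s_2(\RRd,\gamma)}^2$ through the Hermite coefficients $\widehat f(\bk)$, using that $(H_\bk)$ diagonalizes both differentiation (a shift in the index) and the fractional Ornstein--Uhlenbeck operator via \eqref{eq:eigen}. Summing the resulting weights over $\balpha$ and $e$ should reproduce $\prod_j(k_j+1)^s$ up to constants.

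\textbf{Step 1 (one-dimensional quadratic form).} For $v\in L_2(\RR,\gamma)$ and $\sigma\in(0,1)$, I will show that
$$\int_\RR\int_\RR |v(x)-v(y)|^2K_{2\sigma}(x,y)\,\rd\gamma(x)\,\rd\gamma(y)=c_\sigma\sum_{k\ge1}k^\sigma|\widehat v(k)|^2 ,\qquad c_\sigma=2|\Gamma(-\sigma)|.$$
To prove this, expand the square and use the symmetry of $K_{2\sigma}$, so that the left side equals $2\langle (-L_\gamma)^\sigma v,v\rangle\,|\Gamma(-\sigma)|$ by \eqref{eq:OU-operator}; then apply \eqref{eq:eigen}. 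To avoid principal values, I will argue directly through the Mehler formula instead:
$$\iint |v(x)-v(y)|^2M_t(x,y)\,\rd\gamma\,\rd\gamma=2\sum_k(1-e^{-kt})|\widehat v(k)|^2,$$
which holds since $M_t(x,y)=\sum_k e^{-kt}H_k(x)H_k(y)$ and the Mehler kernel has unit $\gamma$-mass. Next, integrate against $t^{-\sigma-1}\,\rd t$ and use Tonelli, since everything is nonnegative; this gives $\int_0^\infty(1-e^{-kt})t^{-\sigma-1}\,\rd t=|\Gamma(-\sigma)|k^\sigma$. With $p=2$ we have $K_{p\tilde s}=K_{2\tilde s}$, so $\sigma=\tilde s$.

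\textbf{Step 2 (tensorization).} Apply Step 1 coordinatewise to $g=D^\balpha f$ with $|\balpha|_\infty=\bar s$. Since the kernel is a product and $\Delta^e_{\by_e}$ acts separately in each coordinate of $e$, iterating Step 1 (Fubini plus Parseval in the remaining variables) gives
$$\iint|\Delta^e_{\by_e}g|^2\prod_{i\in e}K_{2\tilde s}(x_i,y_i)\,\rd\gamma\,\rd\gamma=c^{|e|}\sum_{\bk}\prod_{i\in e}k_i^{\tilde s}|\widehat g(\bk)|^2 .$$
Then use $\frac{\rd}{\rd x}H_k=\sqrt k\,H_{k-1}$. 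For $f$ with finite norm, justified by the Gaussian integration by parts already behind \eqref{N-eq}, this yields $\widehat{D^\balpha f}(\bk)=\prod_j\sqrt{(k_j+\alpha_j)!/k_j!}\;\widehat f(\bk+\balpha)$. So the squared seminorm term equals $\sum_{\bk}w_{\balpha,e}(\bk)|\widehat f(\bk)|^2$ with
$$w_{\balpha,e}(\bk)\asymp\prod_j \frac{k_j!}{(k_j-\alpha_j)!}\prod_{i\in e}(k_i-\alpha_i)^{\tilde s},$$
supported on $\bk\ge\balpha$. Together with \eqref{N-eq} for the $W^{\bar s}_2$ part (weight $\asymp\prod_j(k_j+1)^{\bar s}$), the squared norm $\|f\|^2_{W^s_2}$ is comparable to $\sum_\bk W(\bk)|\widehat f(\bk)|^2$. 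Here the sum over $(\balpha,e)$ of $p$-th roots versus the square root of the sum costs only constants depending on $d,s$.

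\textbf{Step 3 (weight comparison).} For the upper bound, each weight satisfies $w_{\balpha,e}(\bk)\ll\prod_j(k_j+1)^{\bar s+\tilde s}$. For the lower bound, fix $\bk$ and choose $e=[d]$, $\alpha_j=\bar s$ for $k_j\ge 2\bar s+1$. For the remaining coordinates, with $k_j$ bounded, the factor $(k_j+1)^s$ is comparable to a constant and is already controlled by the $W^{\bar s}_2$ term with a suitable $\balpha$. Hence $\prod_j(k_j+1)^s\ll\sum_{\balpha,e}w_{\balpha,e}(\bk)+\prod_j(k_j+1)^{\bar s}$. To handle mixed coordinates cleanly, I would choose $e=\{j:k_j\ \text{large}\}$ and $\alpha_j=\bar s$ on those $j$, and use $(k_j+1)^s\asymp1$ elsewhere; this requires the $W^{\bar s}$-type factor with $\alpha_j=0$ off $e$. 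That factor is available because the seminorm sums over $|\balpha|_\infty=\bar s$ only, but then some coordinate carries $\alpha_j=\bar s$ with $k_j$ small. There the weight vanishes if $k_j<\bar s$, so I will instead bound those $\bk$ by combining the seminorm in the $e$-coordinates with the fact that $\prod_{j\in e}(k_j+1)^{\tilde s}\cdot\prod_j(k_j+1)^{\bar s}$ over finitely many low indices is dominated by the full $W^{\bar s}$ term times a constant.

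\textbf{Main obstacle.} I expect this lower-bound bookkeeping in Step 3, together with the rigorous justification of Step 1 for general $f$, to be the crux. For Step 1, I would first prove it for Hermite polynomials and pass to the limit using Fatou's lemma and $L_2$-density.
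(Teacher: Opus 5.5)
Your route is the paper's: compute each seminorm term in Hermite coefficients through the one-dimensional quadratic-form identity, tensorize over $e$, shift indices with $\widehat{D^{\balpha}f}(\bk)=\sqrt{(\bk+\balpha)!/\bk!}\,\widehat f(\bk+\balpha)$ (Lemma~\ref{lem:dick}), and compare with \eqref{N-eq}. Two of your steps are cleaner than the paper's.
\begin{itemize}
\item The paper quotes \eqref{eq:help}, which involves a principal value, and its constant $2\Gamma(-\sigma)$ is negative for $\sigma\in(0,1)$; your $2|\Gamma(-\sigma)|$ is the correct one. Your Mehler-plus-Tonelli computation holds verbatim for every $v\in L_2(\RR,\gamma)$, so the Fatou/density argument you plan is unnecessary.
\item For $e=\{1,2\}$ the paper expands a double Hermite series and removes cross terms via the integrals $I_{k_1,\ell_1}$. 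Your Fubini-plus-Parseval iteration has no cross terms and treats all $d$ at once.
\end{itemize}
The paper leaves the final weight comparison implicit.

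In Step 3, however, the obstacle you describe is not there, and the workaround you substitute would not work.
\begin{itemize}
\item Your first choice $e=[d]$ does fail. For $d\ge2$ and $\bk=(K,0,\dots,0)$, every weight $w_{\balpha,e}(\bk)$ with $2\in e$ vanishes: $\bk\ge\balpha$ forces $\alpha_2=0$, and then the factor $(k_2-\alpha_2)^{\tilde s}=0$.
\item Your corrected choice already settles it. Put $e=\{j:k_j\ge\bar s+1\}$ and, if $e\neq\emptyset$, take $\alpha_j=\bar s$ for $j\in e$ and $\alpha_j=0$ for $j\notin e$. Then $|\balpha|_\infty=\bar s$, $\bk\ge\balpha$, and no coordinate with small $k_j$ carries a nonzero $\alpha_j$.
\item For $k\ge\bar s+1$ you have $k!/(k-\bar s)!\ge(k-\bar s+1)^{\bar s}\ge\big((k+1)/(\bar s+1)\big)^{\bar s}$ and $k-\bar s\ge(k+1)/(\bar s+2)$. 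Hence $w_{\balpha,e}(\bk)\gg\prod_{j\in e}(k_j+1)^{s}\gg\prod_{j=1}^d(k_j+1)^{s}$, because $(k_j+1)^s\le(\bar s+1)^s$ for $j\notin e$.
\item If $e=\emptyset$, then $\prod_j(k_j+1)^s\le(\bar s+1)^{ds}$, and the $W^{\bar s}_2$ term suffices.
\end{itemize}
Your fallback cannot replace this argument. The terms enter additively, so a single $(\balpha,e)$ must dominate $\prod_j(k_j+1)^s$. The weight $\prod_j(k_j+1)^{\bar s}$ of the $W^{\bar s}_2$ part cannot absorb the unbounded factor $\prod_{j\in e}(k_j+1)^{\tilde s}$. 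With this correction your argument is complete.
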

To prove the above lemma, we need the representation of derivatives of $f\in W^{s}_2(\RRd,\gamma)$, $s\in \NN$, in terms of Hermite polynomials  which was proved in \cite{DILP18}.
\begin{lemma}\label{lem:dick}
	Let $s\in \NN$. If $f\in W^{s}_2(\RRd,\gamma)$ and $\balpha\in\mathbb{N}_0^{d}$ such that $|\balpha|_\infty\leq s$, then we have
	$$
	D^{\balpha}f(\bx)=\sum_{\bk\ge\balpha}\widehat{f}\left(\bk\right)\sqrt{\frac{\bk!}{\left(\bk-\balpha\right)!}}H_{\bk-\balpha}\left(\bx\right).
	$$
\end{lemma}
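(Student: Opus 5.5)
The plan is to compute the Hermite coefficients of $D^{\balpha}f$ directly, expressing them through the coefficients $\widehat f(\bk)$ by Gaussian integration by parts. The identity then follows from the $L_2(\RRd,\gamma)$-convergence of the Hermite series \eqref{H-series} of $D^{\balpha}f\in L_2(\RRd,\gamma)$.

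First I would record two one-dimensional facts that follow from the Rodrigues formula \eqref{eq:hermite}:
$$
H_k'(x)=\sqrt{k}\,H_{k-1}(x), \qquad x H_m(x)-H_m'(x)=\sqrt{m+1}\,H_{m+1}(x).
$$
The second is equivalent to $-\frac{\rd}{\rd x}\big(H_m(x)\rho(x)\big)=\sqrt{m+1}\,H_{m+1}(x)\rho(x)$, which is immediate from \eqref{eq:hermite}. From it, for a univariate $g$ with $g,g'\in L_2(\RR,\gamma)$, integration by parts gives
$$
\int_{\RR} g'(x)H_m(x)\,\rd\gamma(x)=\sqrt{m+1}\int_{\RR} g(x)H_{m+1}(x)\,\rd\gamma(x).
$$
Here I use that $g\,H_m\rho$ is absolutely continuous with integrable derivative; the boundary terms vanish along a sequence $R_n\to\infty$ because $g H_m\rho\in L_1(\RR)$.

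Next I would pass to $d$ variables and to one coordinate derivative at a time. I take $\balpha=\mathbf e_j$ with $D^{\mathbf e_j}f\in L_2(\RRd,\gamma)$. By the ACL characterization of Sobolev functions (applied locally, where the Gaussian weight is bounded above and below), for almost every choice of the remaining coordinates $x_j\mapsto f(\bx)$ is locally absolutely continuous, and its classical derivative coincides with the weak one. Fubini then shows that both $f(\cdot)$ and $D^{\mathbf e_j}f(\cdot)$ lie in $L_2(\RR,\gamma)$ on almost every line. Applying the univariate identity on each line, and then integrating against $\prod_{i\ne j}H_{m_i}(x_i)\,\rd\gamma$, yields
$$
\widehat{D^{\mathbf e_j}f}(\mathbf m)=\sqrt{m_j+1}\;\widehat f(\mathbf m+\mathbf e_j).
$$
Iterating over the coordinates, and noting that every intermediate derivative $D^{\boldsymbol\beta}f$ with $\boldsymbol\beta\le\balpha$ lies in $L_2(\RRd,\gamma)$ because $|\boldsymbol\beta|_\infty\le s$, I get
$$
\widehat{D^{\balpha}f}(\mathbf m)=\sqrt{\tfrac{(\mathbf m+\balpha)!}{\mathbf m!}}\;\widehat f(\mathbf m+\balpha).
$$
Substituting $\bk=\mathbf m+\balpha$ in the Hermite expansion of $D^{\balpha}f$ gives the claimed series, which converges in $L_2(\RRd,\gamma)$.

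The main obstacle is the rigorous justification of integration by parts for weak derivatives against the Gaussian weight, with no a priori decay of $f$ at infinity. My first approach would be the line-wise argument with vanishing boundary terms along subsequences described above. As a fallback, I would approximate $f$ in $W^s_2(\RRd,\gamma)$ by $f\chi_R$ mollified, where $\chi_R$ is a smooth cutoff, and check that the cutoff error terms $f\,\partial\chi_R$ tend to zero in $L_2(\RRd,\gamma)$. For such smooth compactly supported functions the identity is classical, and it passes to the limit because each coefficient is a continuous functional on $L_2(\RRd,\gamma)$.
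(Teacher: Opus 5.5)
Your proof is correct. The paper does not prove this lemma itself; it imports it from \cite{DILP18}, so there is no in-paper argument to compare against, and what you give is a self-contained version of the standard derivation. In your version the primary statement is the coefficient identity
\[
\widehat{D^{\balpha}f}(\mathbf m)=\sqrt{\tfrac{(\mathbf m+\balpha)!}{\mathbf m!}}\;\widehat f(\mathbf m+\balpha),\qquad \mathbf m\in\NN_0^d,
\]
which you obtain from $-\frac{\rd}{\rd x}(H_m\rho)=\sqrt{m+1}\,H_{m+1}\rho$ by line-wise integration by parts and iteration over coordinates. The series formula is then just this identity rewritten through the $L_2(\RRd,\gamma)$ Hermite expansion of $D^{\balpha}f$.

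This ordering buys two things. First, it makes explicit that the identity in the lemma holds in $L_2(\RRd,\gamma)$, not pointwise. Second, the coefficient identity is exactly what the paper extracts from the lemma in the proof of Lemma~\ref{lem:embedding2}.

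One small sharpening: $(gH_m\rho)'=g'H_m\rho-\sqrt{m+1}\,gH_{m+1}\rho$ lies in $L_1(\RR)$, so the limits of $gH_m\rho$ at $\pm\infty$ exist. Since $gH_m\rho\in L_1(\RR)$, these limits must vanish. You therefore need neither a subsequence $R_n$ nor the cutoff/mollification fallback.
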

\begin{proof}[Proof of Lemma \ref{lem:embedding2}] We prove the case $s\not \in\NN$.
For simplicity and clarity, we consider the case $d=2$. The case $d>2$ can be treated analogously. We use the following formula for $v\in \mathcal{H}^\sigma(\RR,\gamma)$, $\sigma\in (0,1)$,
\begin{equation}\label{eq:help}
\begin{aligned}
	\int_{\mathbb{R}} \rd\gamma(x)
	\int_{\mathbb{R}}
	|v(x) - v(y)|^2 \, K_{2\sigma}(x,y)\, \rd\gamma(y)&=
	2\Gamma(-\sigma)\int_{\mathbb{R}} v(x)\, (-L_\gamma)^\sigma v(x) \, \rd\gamma(x)
\end{aligned}	 
\end{equation}
which can be found in \cite[page 6]{CCMP22}. For $\balpha\in \NN_0^2$ with $|\balpha|_\infty=\bar{s}$, denote $u=D^{\balpha}f$. By $A^\balpha_{1}$, $A^\balpha_{2}$ and $A^\balpha_{1,2}$ we denote three integral terms in \eqref{eq:quasi-norm2} corresponding to the three cases  $e=\{1\}$, $e=\{2\}$ and $e=\{1,2\}$. By using \eqref{eq:help} we have
\begin{equation*}
	\begin{aligned}
A^\balpha_{1}&=\int_{\RR^{2}}\rd\gamma(\bx)\int_{\RR}
	\left|u(x_1,x_2)-u(y_1,x_2)    \right|^2K_{2\tilde{s}}(x_1,y_1)  \rd\gamma(y_1)
	\\
	&=2\Gamma(-\tilde{s})\int_{\RR}\rd\gamma(x_2)\int_{\RR}u(\bx)(-L_\gamma)_1^{\tilde{s}} u(\bx) \, \rd\gamma(x_1)\,.
	\end{aligned}
\end{equation*}
Here $\big(L_\gamma\big)_j^{\tilde{s}}$ is the fractional Ornstein–Uhlenbeck operator acting on the $j$-th coordinate. Using \eqref{H-series} and \eqref{eq:eigen} we get
\begin{equation*}
	\begin{aligned}
A_1^\balpha	&=2\Gamma(-\tilde{s})\sum_{\bk\in\NN_0^2}\widehat{u}(\bk)\int_{\RR^2}u(\bx)(-L_\gamma)_1^{\tilde{s}}  H_{\bk}(\bx) \, \rd\gamma(\bx)
	\\
	&=2\Gamma(-\tilde{s})\sum_{\bk\in\NN_0^2}k_1^{\tilde{s}}\widehat{u}(\bk)\int_{\RR^2}u(\bx) H_{\bk}(\bx) \, \rd\gamma(\bx)
		=2\Gamma(-\tilde{s})\sum_{\bk\in\NN_0^2}k_1^{\tilde{s}}|\widehat{u}(\bk)|^2\,.
	\end{aligned}
\end{equation*}
The case $e=\{2\}$ is carried similarly. Now we consider the case $e=\{1,2\}$. Since
$$\Delta^{\{1,2\}}_{\by}u(\bx)=\Delta_{y_2,2} \Delta_{y_1,1}u(\bx)$$
by \eqref{eq:help} we have
\begin{equation*}
	\begin{aligned}
	A^{\balpha}_{1,2}	&=\int_{\RR^{2}}\rd\gamma(\bx)\int_{\RR^2}
\big[\Delta^{\{1,2\}}_{\by}u(\bx)\big]^2K_{2\tilde{s}}(x_1,y_1)K_{2\tilde{s}}(x_2,y_2)   \rd\gamma(\by)
\\		
&=	2\Gamma(-\tilde{s})\int_{\RR^{2}}\rd\gamma(\bx)\int_{\RR}
	 \Delta_{y_1,1} u(\bx)\big(-L_\gamma\big)_2^{\tilde{s}} \Delta_{y_1,1} u(\bx) K_{2\tilde{s}}(x_1,y_1)  \rd\gamma(y_1)\,.
	\end{aligned}
\end{equation*}
We write
$$
\Delta_{y_1,1} u(\bx)=u(x_1,x_2)-u(y_1,x_2)=\sum_{\bk\in \NN_0^2} \widehat{u}(\bk) \Delta_{y_1} h_{k_1}(x_1)h_{k_2}(x_2)
$$
and using \eqref{eq:eigen} to obtain
\begin{equation*}
	\begin{aligned}
A^{\balpha}_{1,2}
&= 	2\Gamma(-\tilde{s})
\sum_{\bk,\bell\in\NN_0^2}k_2^{\tilde{s}} \widehat{u}(\bk)\widehat{u}(\bell)\int_{\RR^{2}}\rd\gamma(\bx)
\\
&\ \ \ \ \times \int_{\RR}\big[\Delta_{y_1} h_{k_1}(x_1)\Delta_{y_1} h_{\ell_1}(x_1)\big]h_{k_2}(x_2)h_{\ell_2}(x_2) K_{2\tilde{s}}(x_1,y_1)  \rd\gamma(y_1)
\\
&=	2\Gamma(-\tilde{s})
\sum_{\bk,\bell\in\NN_0^2}k_2^{\tilde{s}} \widehat{u}(\bk)\widehat{u}(\bell)\delta_{k_2,\ell_2}\int_{\RR}\rd\gamma(x_1)\int_{\RR}\big[\Delta_{y_1} h_{k_1}(x_1) \Delta_{y_1} h_{\ell_1}(x_1)\big] K_{2\tilde{s}}(x_1,y_1)  \rd\gamma(y_1).
	\end{aligned}
\end{equation*}
Here $\delta_{k_2,\ell_2}$ is the Kronecker delta. Denote the the integral factor by $I_{k_1,\ell_1}$. If $k_1\not =\ell_1$ then 
\begin{equation*}
	\begin{aligned}
I_{k_1,\ell_1}&=\int_{\RR}\rd\gamma(x_1)\int_{\RR}\big[h_{k_1}(x_1)-h_{k_1}(y_1)\big]  h_{\ell_1}(x_1) K_{2\tilde{s}}(x_1,y_1)  \rd\gamma(y_1)
		\\
& -\int_{\RR}\rd\gamma(x_1)\int_{\RR}\big[h_{k_1}(x_1)-h_{k_1}(y_1)\big] h_{\ell_1}(y_1) K_{2\tilde{s}}(x_1,y_1)  \rd\gamma(y_1)
		\\
		&=2\int_{\RR}\rd\gamma(x_1)\int_{\RR}\big[h_{k_1}(x_1)-h_{k_1}(y_1)\big]  h_{\ell_1}(x_1) K_{2\tilde{s}}(x_1,y_1)  \rd\gamma(y_1).
	\end{aligned}
\end{equation*}
Here, in the second equality, we interchange the roles of $x_1$ and $y_1$. By \eqref{eq:OU-operator} and \eqref{eq:eigen} we obtain
\begin{equation*}
	\begin{aligned}
		I_{k_1,\ell_1}
		&=2\Gamma(-\tilde{s})\int_{\RR} h_{\ell_1}(x_1)(-L_\gamma)^{\tilde{s}} h_{k_1}(x_1)\rd\gamma(x_1)	=0.
	\end{aligned}
\end{equation*}
In the case $k_1 =\ell_1$ we have by \eqref{eq:help} and \eqref{eq:eigen}
\begin{equation*}
	\begin{aligned}
I_{k_1,\ell_1}	&=\int_{\RR}\rd\gamma(x_1)	\int_{\RR}\big[\Delta_{y_1} h_{k_1}(x_1)\big]^2 K_{2\tilde{s}}(x_1,y_1)  \rd\gamma(y_1)
		\\
		&=2\Gamma(-\tilde{s})\int_{\RR} h_{k_1}(x_1) (-L_\gamma)^{\tilde{s}} h_{k_1}(x_1)  \rd\gamma(x_1)=2\Gamma(-\tilde{s}) k_1^{\tilde{s}}.
	\end{aligned}
\end{equation*}
Hence
\begin{equation}\label{eq:equivalent-01}
	\begin{aligned}
A^{\balpha}_{1,2}=4\Gamma(-\tilde{s})^2
\sum_{\bk \in\NN_0^2} (k_1k_2)^{\tilde{s}}|\widehat{u}(\bk)|^2.
\end{aligned}
\end{equation}
From Lemma \ref{lem:dick} we have
\begin{equation*}
\begin{aligned}
\widehat{u}(\bk) =\int_{\RR^2} D^{\balpha}f(\bx) H_\bk(\bx)\rd\gamma(\bx) &=\sum_{\bell\ge\balpha}\widehat{f}\left(\bell\right)\sqrt{\frac{\bell!}{\left(\bell-\balpha\right)!}}\int_{\RR^2}H_{\bell-\balpha}\left(\bx\right) H_\bk(\bx)\rd\gamma(\bx)
\\
&= \widehat{f}(\balpha+\bk)\sqrt{\frac{(\balpha+\bk)!}{\bk!}}\,.
\end{aligned}
\end{equation*}
Inserting this into \eqref{eq:equivalent-01} we finally obtain
\begin{equation*}
\begin{aligned}
A^\balpha_{1}+A^\balpha_{2}+A^\balpha_{1,2}&\asymp\sum_{\bk\in\NN_0^2}\big(k_1^{\tilde{s}}+k_2^{\tilde{s}}+(k_1k_2)^{\tilde{s}}\big)  \frac{(\balpha+\bk)!}{\bk!}	|\widehat{f}(\balpha+\bk)|^2
\end{aligned}
\end{equation*}
which together with \eqref{N-eq} implies
\begin{equation*}
	\begin{aligned}
\|f\|_{W^{\bar{s}}_p(\mathbb{R}^d,\gamma)}^2+\sum_{|\balpha|_\infty = \bar{s}}	\Big(A^\balpha_{1}+A^\balpha_{2}+A^\balpha_{1,2} \Big)
		\asymp \sum_{\bk\in \NN_0^2} (1+k_1)^{s}(1+k_2)^s |\widehat{f}(\bk)|^2.		 
	\end{aligned}
\end{equation*}
The proof is completed.\hfill
\end{proof}
\section{Numerical integration for fractional Gaussian Sobolev spaces}\label{sec:numer}
In this section, we derive a quadrature rule on $\mathbb{R}^d$ for the numerical integration of functions in fractional Gaussian Sobolev spaces by appropriately assembling quadrature rules defined on the cube $\IId := \big[-\frac{1}{2}, \frac{1}{2}\big]^d$ for functions from classical Sobolev spaces with mixed smoothness. We show that this construction preserves the convergence rate. As a consequence, we establish the asymptotic order of $\Int_n(\boldsymbol{W}^s_{p}(\RRd,\gamma))$
and $\Int_n(\boldsymbol{W}^s_{p,G}(\RRd,\gamma))$.

We begin by recalling known results on the numerical integration of function spaces with dominating mixed smoothness on $\IId$. The problem of approximating integrals of functions from such spaces has attracted considerable attention over the past six decades; see   \cite{Ko59,Hl62,Bakhvalov1963,Fro76,Tem86b,Tem1990,Tem1991,Tem03,Du93,Du97,Tr10b,Hil10,HMOU14,NoWo10,KrNo15,UU2015,DP2010,DU15,Ma13}. Historical remarks and additional references can be found in the recent survey \cite[Sect.\ 8.8--8.9]{DTU18B}.
The following theorem is well known.
\begin{theorem}\label{thm:help}
	Let $1< p<\infty$ and $s>\frac{1}{p}$ and $s\not \in \NN$. Then we have
	\begin{equation}\label{Int_n>}
		\Int_n(\mathring{\boldsymbol{W}^s_p}(\IId)) \asymp \Int_n({\boldsymbol{W}}^s_p(\IId))\asymp n^{-s}(\log n)^{(d-1)(1-\frac{1}{p})},
	\end{equation}
where $\mathring{W}^s_p(\IId)$ denotes the space of functions in $\Wap(\RRd)$ whose support is the subset of $\IId$. 
\end{theorem}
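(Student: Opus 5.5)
Theorem \ref{thm:help} is described as well known, so the plan is to assemble it from the Besov-space literature rather than prove anything from scratch. It reduces to four estimates. The trivial comparison $\Int_n(\mathring{\boldsymbol{W}^s_p}(\IId)) \le \Int_n(\boldsymbol{W}^s_p(\IId))$ holds because (up to a constant) the unit ball of $\mathring{W}^s_p(\IId)$ sits inside that of $W^s_p(\IId)$. It then suffices to prove an upper bound for the larger class and a lower bound for the smaller class, both of order $n^{-s}(\log n)^{(d-1)(1-\frac1p)}$.

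First, I identify the spaces. By the remark after Definition \ref{def:Sobolev} (Schmeisser--Triebel), $W^s_p(\IId)=S^s_{p,p}B(\IId)$ with equivalent norms whenever $s\notin\NN$. So all statements may be read for Besov spaces of dominating mixed smoothness with $q=p$.

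Second, the upper bound. For $\frac1p<s$ I would invoke the known optimal quadrature results for $S^s_{p,q}B$ on the cube. The natural candidates are Frolov's method, in the versions of Ullrich--Ullrich and Nguyen--Ullrich--Ullrich, or higher-order digital nets (Dick, Hinrichs--Markhasin--Oettershagen--Ullrich). These give error $\ll n^{-s}(\log n)^{(d-1)(1-\frac1q)}$ for functions supported in the cube. To pass to non-periodic functions on the whole cube, I would use one of two devices. One is a smooth change of variables (a periodizing transform of sufficiently high order), which maps $S^s_{p,p}B(\IId)$ boundedly into the class of functions vanishing at the boundary. The other is a bounded extension operator into $S^s_{p,p}B(\RRd)$ followed by multiplication by a cutoff and an enlarged cube. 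With $q=p$, this yields the upper bound $n^{-s}(\log n)^{(d-1)(1-\frac1p)}$.

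Third, the lower bound for $\mathring W^s_p(\IId)$. Given any $n$ nodes, choose a level $m$ with $2^m\asymp n$. Among the dyadic rectangles $\prod_j[2^{-\ell_j}k_j,2^{-\ell_j}(k_j+1))$ with $|\bell|_1=m+c$, at least half contain no node for each of the $\asymp m^{d-1}$ choices of $\bell$. Place nonnegative tensor-product bumps $\psi_{\bell,\bk}$ in those empty rectangles and set $f=\sum_{\bell,\bk}\psi_{\bell,\bk}$ with the normalization $2^{-ms}\cdot 2^{-m(1-1/p)}\cdots$ chosen so that $f$ lies in the unit ball. The quadrature then returns $I_n(f)=0$. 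The exact integral of the bumps gives $\int f\gg n^{-s}m^{(d-1)(1-1/p)}$ after normalization, since the $\ell_p$-sum over the $m^{d-1}$ levels contributes the factor $m^{(d-1)/p}$ to the norm.

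The main obstacle is the norm estimate $\|f\|_{S^s_{p,p}B}\ll 2^{m s}2^{-m/p}\,m^{(d-1)/p}$ in the third step. I would handle it with the characterization of $S^s_{p,p}B$ by $B$-spline or wavelet coefficients (the Dinh D\~ung / Triebel discretization), which turns the estimate into an $\ell_p$ sequence-norm computation. The dyadic counting itself is routine. Equally, I can cite Temlyakov and Dinh D\~ung--Temlyakov--Ullrich \cite[Sect.\ 8]{DTU18B} for this lower bound, where it is stated for $S^s_{p,q}B$ with $1<p<\infty$ and $s>\frac1p$.
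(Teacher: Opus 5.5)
Your proposal is correct and follows the same route as the paper's sketch. It uses a fooling function built from nonnegative bumps in node-free dyadic rectangles for the lower bound on $\mathring{W}^s_p(\IId)$, Frolov's cubature for $\mathring{W}^s_p(\IId)$, and the integral-preserving change of variables $f\mapsto \big(\prod_{j=1}^d\psi'(x_j)\big)\, f(\psi(x_1),\ldots,\psi(x_d))$ of \cite{Du97,NUU17,Tem93B} to reach $W^s_p(\IId)$. However, drop your alternative extension-plus-cutoff device for that last step: with $E$ the extension operator and $\chi$ the cutoff, a cubature on the enlarged cube applied to $\chi Ef$ approximates $\int \chi Ef$ rather than $\int_{\IId} f$, and it needs values of $Ef$ at nodes outside $\IId$, which are not function values of $f$.
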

The lower bound in \eqref{Int_n>} is derived  by constructing a fooling function in $\BWpgamma$ whose support does not contain any integration nodes, and then applying the corresponding inequality 
\begin{equation*} 
	\Int_n\big(\mathring{\boldsymbol{W}^s_p}(\IId)\big) \ge \inf_{{\bx_1,\dots,\bx_n} \subset \IId } \ \sup_{\substack{f\in \mathring{\boldsymbol{W}^s_p}(\IId) , f(\bx_i)= 0,\ i =1,\dots,n}}\left|\int_{\IId} f(\bx)\rd\bx\right|\,.
\end{equation*}
The upper bound in \eqref{Int_n>} for $\Int_n(\mathring{\boldsymbol{W}^s_p}(\IId))$ is obtained by means of Frolov-type cubature rules. In the case $d = 2$, this upper bound can alternatively be achieved using Fibonacci lattice rules. The upper bound of $\Int_n({\boldsymbol{W}}^s_p(\IId))$ follows from a modified Frolov cubature formula by change of variables, which preserves the order of convergence up to a constant factor. See \cite{Du93,Du97,NUU17,Tem93B} and the references therein for further details on this transformation technique.

Combining Lemma~\ref{lem:estimate} (1) and Lemma~\ref{lem:embedding}, and noting that $|\rho(\bx)|\leq C$ for all $\bx\in \IId$ we conclude that $\mathring{W}^s_p(\IId)\subset W^s_{p}(\RRd,\gamma)\subset W^s_{p,G}(\RRd,\gamma)$ and if $f\in \mathring{W}^s_p(\IId)$ we have
$$
\|f\|_{W^s_{p,G}(\RRd,\gamma)} \ll  \|f\|_{W^s_{p}(\RRd,\gamma)} \ll  \|f\|_{\mathring{W}^s_p(\IId)}.
$$
Therefore
\begin{equation} \label{eq:lower0}
\Int_n\big({\boldsymbol{W}}^s_{p,G}(\mathbb{R}^d,\gamma)\big) \gg\Int_n\big({\boldsymbol{W}}^s_{p}(\mathbb{R}^d,\gamma) \big)
\gg	 \Int_n(\mathring{\boldsymbol{W}^s_p}(\IId))  \gg n^{-s}(\log n)^{(d-1)(1-\frac{1}{p})} .
\end{equation}	
%%%%%%%%%%%%%%%%%%%%%%%%%%%%%%%%%
We now proceed to prove the following theorems.
\begin{theorem}\label{thm:main1}
Let $1< p<\infty$ and $s>\frac{1}{p}$ and $s\not \in \NN$. Then we have
	 \begin{equation*}
	\Int_n\big({\boldsymbol{W}}^s_{p,G}(\mathbb{R}^d,\gamma)\big)\gg n^{-s}(\log n)^{(d-1)(1-\frac{1}{p})}
	 \end{equation*}
and if $2<p<\infty$
	 \begin{equation*}
	\Int_n\big({\boldsymbol{W}}^{s}_{p,G}(\mathbb{R}^d,\gamma)\big)\asymp n^{-s}(\log n)^{(d-1)(1-\frac{1}{p})}.
\end{equation*}
\end{theorem}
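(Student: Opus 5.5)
The lower bound is already contained in \eqref{eq:lower0}, which combines Theorem~\ref{thm:help}, Lemma~\ref{lem:estimate}(1) and Lemma~\ref{lem:embedding}. So the work is the upper bound for $2<p<\infty$. I will follow the assembling strategy of \cite{DN23}: build a quadrature on $\RRd$ out of translated copies of an asymptotically optimal rule for $W^s_p(\IId)$, one copy per unit cube $\bk+\IId$, $\bk\in\ZZd$, with the number of nodes decaying in $|\bk|$.

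\textbf{Step 1: localize.} For $\bk\in\ZZd$ let $f_\bk(\bx):=f(\bx+\bk)\rho(\bx+\bk)$ on $\IId$. Then $I^\gamma(f)=\sum_\bk\int_{\IId}f_\bk(\bx)\,\rd\bx$. I will prove the local estimate
$$
\|f_\bk\|_{W^s_p(\IId)}\ll e^{-a|\bk|^2}\|f\|_{W^s_{p,G}(\RRd,\gamma)}
$$
for some $a>0$. On $\bk+\IId$ the density satisfies $\rho(\bx)\asymp_{|\bk|}\rho(\bk)$ up to a factor $e^{C|\bk|}$, and all derivatives of $\rho$ are bounded by $\rho$ times polynomials. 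For the integer part of the norm, a Leibniz rule gives $\|D^\balpha f_\bk\|_{L_p}\ll e^{C|\bk|}\rho(\bk)^{1-1/p}\|f\|_{W^{\bar s}_p(\gamma)}$, because $\rho^p\,\rd\bx=\rho^{p-1}\rd\gamma$.

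For the Gagliardo part I work on $\by_e\in \bk_e+\II^{|e|}$. There the Gaussian weight $\rd\gamma(\bx)\,\rd\gamma(\by_e)$ on the cube pair is comparable to $\rho(\bk)\prod_{j\in e}\rho(k_j)$ times $e^{C|\bk|}$. So the local Lebesgue seminorm of $D^\balpha f$ is bounded by the Gaussian seminorm times $(\rho(\bk)\prod_{j\in e}\rho(k_j))^{-1/p}e^{C|\bk|}$.

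Multiplying by $\rho$ also requires a product rule for mixed differences,
$$
\Delta_y(g\rho)=(\Delta_y g)\rho(x)+g(y)\Delta_y\rho,
$$
with $|\Delta_y\rho|\le|x-y|\sup\rho'$. The extra terms are controlled by lower-order $L_p$ norms, using $\tilde s<1$ so that $|x-y|^{p-1-\tilde s p}$ is integrable, together with $\rho(\bk)^{p}$. The net weight is then $\rho(\bk)\,\rho(\bk)^{-1/p}\prod_{j\in e}\rho(k_j)^{-1/p}e^{C|\bk|}$. In the worst case $e=[d]$ this is $\rho(\bk)^{1-2/p}e^{C|\bk|}$.

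This is exactly where $p>2$ enters: $1-2/p>0$ gives the Gaussian decay $e^{-a|\bk|^2}$. I expect this step to be the main obstacle. The hard part is making the mixed product rule for $\Delta^e_{\by_e}$ applied to $D^\balpha(f\rho)$ rigorous, with all cross terms bounded by the $W^s_{p,G}$ norm.

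\textbf{Step 2: allocate nodes.} Let $Q_m$ be a rule with $m$ nodes on $\IId$ achieving the bound of Theorem~\ref{thm:help} for $\boldsymbol W^s_p(\IId)$. Set $n_\bk:=\lfloor n e^{-b|\bk|^2}\rfloor$ with $b$ small, say $b<a/s$, and restrict to those $\bk$ with $n_\bk\ge1$, i.e.\ $|\bk|\le C\sqrt{\log n}$. The total node count is $\sum_\bk n_\bk\ll n$. Define
$$
I_n(f):=\sum_{n_\bk\ge1}Q_{n_\bk}(f_\bk(\cdot))\quad\text{evaluated at the nodes shifted by }\bk,
$$
with weights multiplied by $\rho$ at the node. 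This is a linear rule in point values of $f$, which are well defined by Lemma~\ref{lem:continuous}.

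\textbf{Step 3: sum the errors.} The error is at most
$$
\sum_{n_\bk\ge1}n_\bk^{-s}(\log n_\bk)^{(d-1)(1-\frac1p)}\,e^{-a|\bk|^2}\;+\;\sum_{n_\bk=0}\|f_\bk\|_{L_1(\IId)}.
$$
In the first sum, $n_\bk^{-s}\le n^{-s}e^{bs|\bk|^2}$ is beaten by $e^{-a|\bk|^2}$, so the sum is $\ll n^{-s}(\log n)^{(d-1)(1-1/p)}$. The tail sum runs over $|\bk|\ge C\sqrt{\log n}$, where it is $\ll e^{-a'\log n}$. Choosing the constant in the cutoff appropriately, i.e.\ with $b$ small relative to $a$, makes this $\ll n^{-s}$.

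Finally, rescale $n$ so that the total number of nodes is at most $n$. Together with \eqref{eq:lower0} this gives the claimed asymptotic order.
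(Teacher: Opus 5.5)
Your proposal is correct and follows essentially the same route as the paper. The lower bound comes from \eqref{eq:lower0}. For the upper bound you split $I^\gamma(f)$ over the cubes $\bk+\IId$, show Gaussian decay of the local $W^s_p(\IId)$-norm of $f\rho$ (with $p>2$ entering exactly through your factor $\rho(\bk)^{1-2/p}$), allocate $n_\bk\sim n e^{-c|\bk|^2}$ nodes, and bound the tail.

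The one difference is the step you flag as the main obstacle. The paper does not use a hand-made mixed Leibniz rule there; it invokes the fact that $W^s_p(\IId)$ is a multiplication algebra \cite{NgS17}. This lets it bound the two factors separately, $\|f(\cdot+\bk)\|_{W^s_p(\IId)}\ll e^{|\bk|^2/p'}\|f\|_{W^s_{p,G}(\RRd,\gamma)}$ and $\|\rho(\cdot+\bk)\|_{W^s_p(\IId)}\ll e^{-|\bk|^2/(2\tau')}$, with $2<2\tau'<p'<p$.
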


\begin{theorem}\label{thm:main2}
	Let $1<p<\infty$, $s>\frac{1}{p}$ and $s\not \in \NN$. Then have
	\begin{equation*}  \Int_n\big({\boldsymbol{W}}^s_{p}(\mathbb{R}^d,\gamma) \big)
		\asymp	n^{-s} (\log n)^{(d-1)(1-\frac{1}{p})}. 
	\end{equation*}
	Consequently, if $s\in \RR$ and $s>\frac{1}{2}$
	\begin{equation*} 
		\Int_n\big({\boldsymbol{\mathcal{H}}}^s(\mathbb{R}^d,\gamma)\big)\asymp	n^{-s} (\log n)^{\frac{d-1}{2}}.
	\end{equation*}
\end{theorem}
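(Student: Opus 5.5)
The lower bound is already in \eqref{eq:lower0}, so what remains is the upper bound
$$\Int_n\big(\bW^s_p(\RRd,\gamma)\big)\ll n^{-s}(\log n)^{(d-1)(1-\frac1p)}.$$
The Hermite statement then follows from Lemma~\ref{lem:embedding2} with $p=2$ for $s\notin\NN$, and from \eqref{DK} together with \eqref{N-eq} for $s\in\NN$.

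For the upper bound I would adapt the assembling approach of \cite{DN23}. Cover $\RRd$ by the translated cubes $Q_\bk:=\bk+\IId$, $\bk\in\ZZd$, and use a smooth partition of unity $\sum_{\bk}\varphi(\cdot-\bk)\equiv1$ with $\varphi$ supported in a slightly enlarged cube, say $2\IId$. Rescaling reduces everything to a fixed cube, so I may take $\mathring W^s_p(2\IId)$ in place of $\mathring W^s_p(\IId)$ in Theorem~\ref{thm:help}. Set $f_\bk:=(f\rho)(\cdot+\bk)\varphi$, so that
$$I^\gamma(f)=\sum_\bk\int f_\bk\,\rd\bx .$$
Given a budget $n$, I assign $n_\bk$ nodes to the cube $\bk$ via Theorem~\ref{thm:help}. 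I choose $n_\bk\approx n\,\tau_\bk$ with $\tau_\bk$ decaying like $e^{-\delta|\bk|^2}$, and set $n_\bk=0$ for $|\bk|_\infty$ beyond roughly $\sqrt{\log n}$. The total number of nodes is then $\ll n$. The error is at most
$$\sum_\bk C\,n_\bk^{-s}(\log n_\bk)^{(d-1)(1-\frac1p)}\|f_\bk\|_{\mathring W^s_p}\;+\;\sum_{n_\bk=0}\|f_\bk\|_{L_1}.$$

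The key local estimate is
$$\|f_\bk\|_{\mathring W^s_p}\ll e^{-c|\bk|^2}\,\|f\|_{W^s_p(\RRd,\gamma)} .$$
Heuristically, $\rho$ on $Q_\bk$ is of size $e^{-|\bk|^2/2}$, while the $L_p(\gamma)$-norms only carry the weight $\rho$. So $(f\rho)$ measured in unweighted $L_p$ picks up a factor $\rho^{1-1/p}\approx e^{-(1-1/p)|\bk|^2/2}$. For the integer-order derivatives I apply Leibniz to $D^\balpha(f\rho\varphi)$; the derivatives of $\rho$ are polynomials times $\rho$, which are harmless. For the fractional seminorm, on the enlarged cube ($|x_j-y_j|\le1$) I use Lemma~\ref{lem:estimate}(2). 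It gives $K_{p\tilde s}(x_j,y_j)\rho(x_j)\rho(y_j)\gg e^{-c'(x_j^2+y_j^2)}|x_j-y_j|^{-1-\tilde sp}$, with $c'$ slightly larger than $\frac14$ once $t_0$ is chosen small. Comparing weights then bounds the unweighted Gagliardo integrand of $D^\balpha f$ on $Q_\bk$ by $e^{C|\bk|^2}$ times the kernel-weighted integrand in \eqref{eq:quasi-norm2}. Combining this with $\rho^p\approx e^{-p|\bk|^2/2}$ for the factor $\rho$ in $f_\bk$ leaves a net Gaussian decay, provided the exponents work out. The product-rule terms of $\Delta^e(gh)$ with $h=\rho\varphi$ smooth split into differences of $D^\balpha f$ (handled as above) and lower-order sup or $L_p$ terms, which are handled by Lemma~\ref{lem:continuous}.

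The tail term uses Hölder:
$$\|f_\bk\|_{L_1}\le\|f\|_{L_p(\gamma)}\big(\textstyle\int_{Q_\bk}\rho\big)^{1-1/p}\approx e^{-c|\bk|^2},$$
which is $\le n^{-s}$ once $|\bk|\ge C\sqrt{\log n}$. Summing the main terms with $n_\bk\asymp n e^{-\delta|\bk|^2}$ gives
$$n^{-s}(\log n)^{(d-1)(1-\frac1p)}\sum_\bk e^{(s\delta-c)|\bk|^2},$$
which converges if $\delta<c/s$.

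The main obstacle is the weight comparison in the fractional term. I need $c'>0$ small enough that the kernel lower bound from Lemma~\ref{lem:estimate}(2), whose growth exponent $\frac{x^2+y^2}{4}\bigl(1-\frac{1-e^{-t_0}}{1+e^{-t_0}}\bigr)$ tends to $\frac{x^2+y^2}{4}$ as $t_0\to0$, combined with the $\rho(x)\rho(y)$ factors, still loses less than the $e^{-p|\bk|^2/2}$ gained from $\rho^p$ in $f_\bk$. I would compute the exponents explicitly: the net loss is about $e^{(1/2-1/4+\epsilon)p'\ldots}$ per coordinate. If positivity of the net decay fails for some $p$, I would instead distribute $\rho=\rho^{a}\rho^{1-a}$, putting only part of $\rho$ into the local function and absorbing the rest as a bounded multiplier.
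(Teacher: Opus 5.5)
Your proposal is correct and has the same architecture as the paper's proof. Both use integer-shifted cubes, local rules from Theorem~\ref{thm:help} with $n_\bk\asymp n e^{-\delta|\bk|^2}$ switched off at $|\bk|\asymp\sqrt{\log n}$, a Gaussian-decaying local bound from Lemma~\ref{lem:estimate}(2) with small $t_0$, and the H\"older tail. The differences lie in how you localize.

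\textbf{The paper's route.} It uses no cut-off. It restricts $f$ and $\rho$ to the disjoint cubes $\IId_\bk$, so the local seminorm is bounded simply by discarding part of the weighted integration domain. The price is that it needs rules for the non-compactly supported space $W^s_p(\IId)$, i.e.\ the change-of-variables part of Theorem~\ref{thm:help}. It also bounds the two factors separately, with $\theta:=\frac{1-e^{-t_0}}{1+e^{-t_0}}$: $\|f_\bk(\cdot+\bk)\|_{W^s_p(\IId)}\ll e^{\frac{(1+\theta)|\bk|^2}{2p'}}$ and $\|\rho_\bk(\cdot+\bk)\|_{W^s_p(\IId)}\ll e^{-\frac{|\bk|^2}{2\tau'}}$. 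It then multiplies them using the algebra property of $W^s_p(\IId)$ from \cite{NgS17}, choosing $t_0$ with $\tau'(1+\theta)<p'$.

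\textbf{Your route.} The smooth partition of unity makes the pieces compactly supported, so plain Frolov rules for $\mathring{W}^s_p$ suffice. Your direct Leibniz/difference-product estimate replaces the multiplier theorem. The cost is a routine argument that the zero extension lies in $W^s_p(\RRd)$ with the expected bound.

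The exponent check you left open closes for every $p>1$, so the splitting $\rho=\rho^a\rho^{1-a}$ is unnecessary. Take $j\in e$ and $x_j,y_j$ near $k_j$. Trading $|x_j-y_j|^{-1-\tilde sp}\,\rd x_j\,\rd y_j$ for $K_{p\tilde s}(x_j,y_j)\,\rd\gamma(x_j)\,\rd\gamma(y_j)$ costs $e^{\frac{1+\theta}{4}(x_j^2+y_j^2)}\approx e^{\frac{1+\theta}{2}k_j^2}$. The factor $\rho^p$ gives $e^{-\frac p2 k_j^2}$. The net factor $e^{-\frac{p-1-\theta}{2}k_j^2+O(|k_j|)}$ decays once $t_0$ is small enough that $\theta<p-1$. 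For $j\notin e$ the net factor is $e^{-\frac{p-1}{2}k_j^2+O(|k_j|)}$.

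Two small repairs are needed:
\begin{enumerate}
\item Lemma~\ref{lem:estimate}(2) assumes $|x-y|\le1$, whereas on $\bk+2\IId$ you have $|x_j-y_j|\le2$. Its proof works verbatim for $|x-y|\le R$ with an $R$-dependent constant.
\item Lemma~\ref{lem:continuous} is only qualitative, so it gives no sup bounds for the lower-order product terms. Treat those terms in $L_p$ via $|h(x)-h(y)|\le|x-y|\sup|h'|$ for the smooth factor $h$. The resulting kernel $|x-y|^{p-1-\tilde sp}$ is integrable because $\tilde s<1$.
\end{enumerate}
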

Since the proofs of these theorems are analogous, we provide a detailed proof of Theorem \ref{thm:main1} and outline the proof of Theorem \ref{thm:main2}.

\begin{proof}[Proof of Theorem \ref{thm:main1}] The lower bound has already been established in \eqref{eq:lower0}. To derive the upper bound, we prove the following result.
	Let $s > 0$ with $s \notin \mathbb{N}$, $2 < p < \infty$, and let $a > 0$, $b \ge 0$.
Assume that for the quadrature
\begin{equation}\label{I_m(f)}
	I_m(g): = \sum_{i=1}^m \lambda_{i,m} g(\bx_{i,m}), \ \ \{\bx_{1,m},\ldots,\bx_{m,m}\}\subset \IId,
\end{equation}
holds the convergence rate
\begin{equation}\label{IntError-a,b}
	\bigg|\int_{\IId} g(\bx) \rd \bx  - I_m(g)\bigg| \leq C m^{-a} (\log m)^b \|g\|_{\Wpmix}, 
	\ \  g\in \Wpmix.
\end{equation}
Then based on  $I_m$, we will construct  a quadrature  on $\RRd$  which approximates the integral $I^\gamma(f)$ with the same convergence rate for 
$f \in W^s_{p,G}(\RRd,\gamma)$, i.e.,
\begin{equation} 	\label{IntError}
	\bigg|\int_{\RRd}f(\bx) \rd\gamma(\bx) - I_n(f)\bigg| 
	\ll n^{-a}  (\log n)^b \|f\|_{W^s_{p,G}(\RRd,\gamma)}.
\end{equation}

To this end for $f\in W^s_{p,G}(\RRd,\gamma) $ we decompose the integral $I^\gamma(f)$ to the sum of component integrals over the  integer-shifted $d$-cubes $\IId_{\bk}$ by 
\begin{align} \label{Int_RRd}
I^\gamma(f)=	 \sum_{\bk \in \ZZd}\int_{\IId_\bk}f_\bk(\bx)\rho_\bk(\bx)\rd \bx,
\end{align}
where  for $\bk \in \ZZd$, $\IId_\bk:=\bk+\IId$,
$f_\bk $ and $\rho_\bk$ denote  the restriction of $f$ and $\rho$ to $\IId_\bk$.
Next we will prove that $f_\bk(\cdot+\bk)\rho_\bk(\cdot+\bk)$ belongs to $\Wpmix$. We have
\begin{equation*}\label{eq:b4}
\begin{split}
	\|f_\bk(\cdot+\bk)\|_{W_p^{\bar{s}}(\IId)}&=\Bigg(\sum_{|\balpha|_\infty \leq \bar{s}} \|D^\balpha f_\bk(\cdot+\bk)\|_{L_p(\IId)}^p\Bigg)^{1/p}
	\\
&=\Bigg(\sum_{|\balpha|_\infty \leq \bar{s}} \|D^\balpha f_\bk\|_{L_p(\IId_\bk)}^p\Bigg)^{1/p}
\\ 
&=\Bigg(\sum_{|\balpha|_\infty \leq \bar{s}} (2\pi)^{d/2} \int_{\IId_\bk}e^{\frac{|\bx|^2}{2}}|D^\balpha f_\bk(\bx)| ^p\rd\gamma(\bx) \Bigg)^{1/p}.
\end{split}
\end{equation*}
Choose $p'$ such that $2<p'<p$. When $\bx \in \IId_{\bk}$ we have 
$e^{\frac{|\bx|^2}{2}}\leq C e^{\frac{p|\bk|^2}{2p'}}$. Therefore,
\begin{equation}\label{eq:bar{s}}
	\|f_\bk(\cdot+\bk)\|_{W_p^{\bar{s}}(\IId)}
	\leq C e^{\frac{|\bk|^2}{2p'}}\|f\|_{W^s_{p,G}(\RRd,\gamma)}.
\end{equation}
Now we estimate the term $[f_\bk(\cdot+\bk)]_{\Wap(\IId)}$. For any $\balpha\in \NN_0^d$ such that $|\balpha|_\infty\leq \bar{s}$ and $e\subset [d]$, $e\not =\emptyset$, we have
\begin{equation}\label{eq:quasi-norm1}
	\begin{aligned}
	&	\int_{\II^{|e|}}\int_{\IId}
		\frac{
			\big| \Delta^e_{\by_e+\bk_e}D^\balpha f_\bk(\bx+\bk)  \big|^p
		}{
			\prod_{j\in e}	|x_j - y_j|^{1+\tilde{s}p}
		}
	\rd\bx	\rd\by_e
	\\
		&\leq   \int_{\II^{|e|}_{\bk_e}}\int_{\IId_\bk} e^{\frac{|\bx|^2+|\by_e|^2}{2}}
		\frac{
			\big| \Delta^e_{\by_e}D^\balpha f_\bk(\bx) \big|^p
		}{
			\prod_{j\in e}	|x_j - y_j|^{1+\tilde{s}p}
		}
	 \rd\gamma(\bx)	\rd\gamma(\by_e)
		\\
			&\leq C e^{\frac{p|\bk|^2}{p'}} \int_{\II^{|e|}_{\bk_e}}\int_{\IId_\bk}
	\frac{
		\big| \Delta^e_{\by_e}D^\balpha f_\bk(\bx) \big|^p
	}{
		\prod_{j\in e}	|x_j - y_j|^{1+\tilde{s}p}
	} \rd\gamma(\bx)	\rd\gamma(\by_e)
	\\
		&\leq C e^{\frac{p|\bk|^2}{p'}} \|f\|_{W^s_{p,G}(\RRd,\gamma)}^p.
	\end{aligned}
\end{equation}
Altogether we find that
\begin{equation}
	\label{eq:norm-fwid}	\|f_\bk(\cdot+\bk)\|_{\Wpmix}\leq C e^{\frac{|\bk|^2}{p'} }\|f\|_{W^s_{p,G}(\RRd,\gamma)}.
\end{equation}

Since
$
W^r_p(\IId) $ with $r:=\lceil s\rceil$ is continuously embedded into $\Wap(\IId)=S^s_{p,p}B(\IId)$ we have 
$$\|\rho_{\bk}(\cdot+\bk)\|_{W^s_p(\IId)}\leq C\|\rho_{\bk}(\cdot+\bk)\|_{W^r_p(\IId)}=\Bigg(\sum_{|\balpha|_\infty \leq r} \|D^\balpha \rho\|_{L_p(\IId_\bk)}^p\Bigg)^{1/p}.$$
By \eqref{eq:hermite} we have
$$
\frac{\rd^k}{\rd x^k}\rho(x)=(-1)^k\sqrt{k!} H_k(x)\rho(x).
$$
Therefore for $\bx \in \IId_\bk$ we get
$$
|D^\balpha \rho(\bx)|=\Big|(-1)^{|\balpha|_1}\sqrt{\balpha!}H_\balpha(\bx) \rho(\bx)\Big| \leq Ce^{-\frac{|\bx|^2}{2\tau}} \leq   Ce^{-\frac{|\bk|^2}{2\tau'}}
	$$
for some $\tau'$ and $\tau$ such that $1<\tau<\tau'<\frac{p'}{2}<\infty $. Here we need the condition $2<p'<\infty$. This implies that
\begin{align} \label{g_bk}
	\|\rho_{\bk}(\cdot+\bk)\|_{\Wpmix}  \leq  Ce^{-\frac{|\bk|^2}{2\tau'}}
\end{align}
with $C$ independent of $\bk \in \ZZd$. Since $\Wpmix$ is a multiplication algebra (see \cite[Theorem 3.16]{NgS17}), from \eqref{eq:norm-fwid} and \eqref{g_bk}  we have that 
\begin{align*} \label{multipl-algebra1}
f_{\bk}(\cdot+\bk)\rho_{\bk}(\cdot+\bk)\in \Wpmix,
\end{align*}
 and 
 \begin{equation*}  
\begin{aligned} \label{multipl-algebra2}
	\|f_{\bk}(\cdot+\bk)\rho_{\bk}(\cdot+\bk)\|_{\Wpmix} 
	& 
	\leq C \|f_{\bk}(\cdot+\bk)\|_{\Wpmix}  \cdot \|\rho_{\bk}(\cdot+\bk)\|_{\Wpmix}
	\\
	& \leq C e^{\frac{|\bk|^2}{p'}-\frac{|\bk|^2}{2\tau'}}\|f\|_{W^s_{p,G}(\RRd,\gamma)}.
\end{aligned}
\end{equation*}
We choose $\delta>0$ so that
\begin{equation}  \label{eq:delta}
	\max \bigg\{e^{-\frac{p|\bk|^2}{2p'}\big(1-\frac{1}{p}\big)},
	e^{\frac{|\bk|^2}{p'}-\frac{|\bk|^2}{2\tau'}}\bigg\}
	\leq 
	C e^{-\delta |\bk|^2}
\end{equation}
for $\bk\in \ZZd$, and therefore,
\begin{align}  \label{f_{bk}<}
\|f_{\bk}(\cdot+\bk)\rho_{\bk}(\cdot+\bk)\|_{\Wpmix} 
	& \leq C e^{-\delta |\bk|^2}\|f\|_{W^s_{p,G}(\RRd,\gamma)}, \qquad  \bk\in \ZZd.
\end{align}

For a given $n \in \NN$, we define $n_\bk\in \NN_0$, $\bk\in \ZZ^d$, such that $
\sum_{\bk \in \ZZd}  n_\bk  \le n
$. Then we take ``shifted" quadratures $I_{n_\bk}$  of the form \eqref{I_m(f)} to approximate the integral $\int_{\IId_\bk}f_\bk(\bx)\rho_\bk(\bx)\rd \bx$  in  \eqref{Int_RRd}. 
In the next step, we assemble these shifted integration nodes to construct a quadrature rule on $\mathbb{R}^d$ for approximating $I^\gamma(f)$. We now describe this construction in detail.

  We define	for $n\in \NN$,
\begin{equation} \label{xi-int}	
	\xi_n =  \sqrt{\delta^{-1} 2 a(\log n)}\,,
\end{equation}
and for $\bk \in \ZZd$ with $|\bk|< \xi_n$
\begin{equation*} \label{n_bk}
	n_{\bk}= 
	\lfloor \varrho n  e^{-\frac{\delta}{2 a}|\bk|^2}  \rfloor,
\end{equation*}
where $\delta$ is in \eqref{eq:delta} and \begin{equation}\label{eq-c2-2.23}
	\varrho:=\Bigg(\sum_{j=0}^{\infty}\left[\left(2j+1\right)^{d}-\left(2j-1\right)^{d}\right]e^{-\frac{\delta}{2a}j^{2}}\Bigg)^{-1}.
\end{equation} We have 
$$
\sum_{\left|\bk\right|<\xi_{n}} n_{\bk} \leq \sum_{\left|\bk\right|<\xi_{n}} \varrho n e^{-\frac{\delta}{2 a}\left|\bk\right|^{2}}\leq \varrho n \sum_{\left|\bk\right|_{\infty}<\xi_{n}} e^{-\frac{\delta}{2 a}\left|\bk\right|_{\infty}^{2}}\leq \varrho n \sum_{j=0}^{\left\lfloor\xi_{n}\right\rfloor}\sum_{\left|\bk\right|_{\infty}=j}e^{-\frac{\delta}{2 a}j^2}.
$$
Since
$$|\{\bk \in \mathbb{Z}^{d}: \left|\bk\right|_{\infty}=j\}|=\left(2j+1\right)^{d}-\left(2j-1\right)^{d},$$ from
\eqref{eq-c2-2.23} we get
\begin{equation}\label{eq:<n}
\sum_{\left|\bk\right|<\xi_{n}} n_{\bk} \leq \varrho n \sum_{j=0}^{\left\lfloor\xi_{n}\right\rfloor}\left[\left(2j+1\right)^{d}-\left(2j-1\right)^{d}\right]e^{-\frac{\delta}{2 a}j^{2}} \leq n.
\end{equation}

Since $f_{\bk}(\cdot+\bk)\rho_{\bk}(\cdot+\bk)\in \Wpmix$ by \eqref{I_m(f)} we can define quadrature
$$
I_{n_\bk}\big[f_{\bk}(\cdot+\bk)\rho_{\bk}(\cdot+\bk)\big]:=\sum_{j=1}^{n_\bk} \lambda_{j,n_\bk}  f_{\bk}\big(\bx_{j,n_\bk}+\bk\big)\rho_{\bk}\big(\bx_{j,n_\bk}+\bk\big)
$$
which together with \eqref{IntError-a,b} and \eqref{f_{bk}<} derive  the estimate
\begin{align*}
	\bigg|\int_{\IId_\bk} f_{\bk}(\bx)\rho_{\bk}(\bx)\rd \bx 
	& - I_{n_\bk}(f_{\bk}(\cdot+\bk)\rho_{\bk}(\cdot+\bk))\bigg| 
	\\& = \bigg|\int_{\IId} f_\bk(\bx+\bk)\rho_{\bk}(\bx+\bk)\rd \bx- I_{n_\bk}\big[f_{\bk}(\cdot+\bk)\rho_{\bk}(\cdot+\bk)\big]\bigg|
	\\
	& \leq  Cn_{\bk}^{-a} (\log n_{\bk})^b e^{-\delta |\bk|^2} \|f\|_{W^s_{p,G}(\RRd,\gamma)}
	\\
	& \leq C ( n e^{-\frac{\delta}{2a}|\bk|^2} )^{-a} (\log n)^b
	e^{-\delta |\bk|^2}\|f\|_{W^s_{p,G}(\RRd,\gamma)}
	\\
	&= C   n^{-a} (\log n)^be^{-\frac{|\bk|^2\delta}{2}}\|f\|_{W^s_{p,G}(\RRd,\gamma)}.
\end{align*}
Hence,
\begin{equation}\label{eq:first-term}
\begin{aligned}
	\sum_{|\bk|< \xi_n} \bigg|\int_{\IId_\bk} f_{\bk}(\bx)\rho_{\bk}(\bx)\rd \bx 
	&- I_{n_\bk}\big[f_{\bk}(\cdot+\bk)\rho_{\bk}(\cdot+\bk)\big]\bigg| 
	\\
	& \leq C \sum_{|\bk|< \xi_n} n^{-a}  (\log n)^be^{-\frac{|\bk|^2\delta}{2}}\|f\|_{W^s_{p,G}(\RRd,\gamma)}
	\\&
	\leq C n^{-a}  (\log n)^b\|f\|_{W^s_{p,G}(\RRd,\gamma)}.
\end{aligned}	 
\end{equation}

 We define the quadrature for the approximate integration of  $f$ by
\begin{equation} \label{I_n^gamma}
	\begin{aligned}
I_n(f)&:=\sum_{|\bk|< \xi_n}I_{n_\bk}\big[f_{\bk}(\cdot+\bk)\rho_{\bk}(\cdot+\bk)\big]
\\
&
= \sum_{|\bk|< \xi_n}\sum_{j=1}^{n_\bk} \lambda_{j,n_\bk}  \rho\big(\bx_{j,n_\bk}+\bk\big)f_{\bk}\big(\bx_{j,n_\bk}+\bk\big).		 
	\end{aligned}
\end{equation}
The  integration nodes of this quadrature  are
\begin{equation*} \label{int-nodes}
\big\{\bx_{j,n_\bk}+\bk: |\bk|< \xi_n, \, j=1,\ldots,n_\bk\big\}
\subset \RRd,
\end{equation*}
and the  integration weights 
$$
\big\{\lambda_{j,n_\bk}  \rho\big(\bx_{j,n_\bk}+\bk\big): |\bk|< \xi_n, \, j=1,\ldots,n_\bk\big\}.
$$
Due to \eqref{eq:<n}, the number of integration nodes does not exceed $n$. By construction, all integration nodes lie within a ball of radius $\sqrt{d}/2 + \xi_n$. Moreover, the density of the integration nodes decays exponentially in $|\bk|$ as one moves from the origin in $\mathbb{R}^d$ toward the boundary of this ball. Consequently, the set of integration nodes is highly sparse.

From \eqref{Int_RRd} and \eqref{I_n^gamma}
it follows that
\begin{equation}\label{eq:decomposition}
	\begin{aligned}
	\bigg|\int_{\RRd}f(\bx)\rd \gamma(\bx) - I_n(f)\bigg| 
	&\leq \sum_{|\bk|< \xi_n} \bigg|\int_{\IId_\bk} f_{\bk}(\bx)\rho_{\bk}(\bx)\rd \bx - 
	I_{n_\bk}\big[f_{\bk}(\cdot+\bk)\rho_{\bk}(\cdot+\bk)\big]\bigg| 
	\\ &
	+ \sum_{|\bk|\geq \xi_n}\bigg|\int_{\IId_\bk}f_\bk(\bx)\rho_\bk(\bx)\rd \bx\bigg|.
\end{aligned}	 
\end{equation}
For each term in the second sum, by H\"older's inequality and \eqref{eq:delta} we obtain
	\begin{align*}
		\bigg|\int_{\IId_\bk} f_\bk(\bx) \rho_{\bk}(\bx)\rd \bx \bigg| 
		&\leq \bigg(\int_{\IId_\bk}|f_\bk(\bx)|^p \rho_\bk(\bx)\rd \bx \bigg)^{\frac{1}{p}} \bigg(\int_{\IId_\bk}\rho_\bk(\bx) \rd \bx \bigg)^{1-\frac{1}{p}} 
		\\
		&\leq C e^{-\frac{p|\bk|^2}{2p'}(1-\frac{1}{p})}\|f\|_{W^s_{p,G}(\RRd,\gamma)}
		\\
		&
		\leq Ce^{-\delta |\bk|^2}\|f\|_{W^s_{p,G}(\RRd,\gamma)}.
	\end{align*}
Therefore
\begin{equation*} 
	\begin{aligned}
		\sum_{|\bk|\geq \xi_n}\bigg|\int_{\IId_\bk}f_\bk(\bx)\rho_\bk(\bx)\rd \bx\bigg|
		&
		\leq C\sum_{\ell=\lceil\xi_n^2\rceil}^\infty   \sum_{|\bk|^2=\ell}e^{-\ell \delta}\|f\|_{W^s_{p,G}(\RRd,\gamma)}
		\\
		&  \leq C\sum_{\ell=\lceil\xi_n^2\rceil}^\infty   \sum_{|\bk|_\infty\leq \sqrt{\ell}}e^{-\ell \delta}\|f\|_{W^s_{p,G}(\RRd,\gamma)}
		\\
		&  \leq C \sum_{\ell=\lceil\xi_n^2\rceil}^\infty   e^{-\ell \delta}\ell^{d/2} \|f\|_{W^s_{p,G}(\RRd,\gamma)}.
	\end{aligned}
\end{equation*} 
Fix $\varepsilon \in (0,1/2)$ we get
\begin{equation*}\label{eq-epsilon01}
	\begin{aligned}
		\sum_{|\bk|\geq \xi_n}\bigg|\int_{\IId_\bk}f_\bk(\bx)\rho_\bk(\bx)\rd \bx\bigg|
&
	\leq  C e^{-\xi_n^2\delta(1-\varepsilon)}\sum_{\ell=\lceil\xi_n^2\rceil}^\infty   e^{-\ell\varepsilon \delta}\ell^{d/2}\|f\|_{W^s_{p,G}(\RRd,\gamma)}
		\\
	&
\leq C e^{-\xi_n^2\delta(1-\varepsilon)}\|f\|_{W^s_{p,G}(\RRd,\gamma)}.
	\end{aligned}
\end{equation*} 
Using \eqref{xi-int} we arrive at
\begin{equation}\label{eq:second-term}
	\begin{aligned}
		\sum_{|\bk|\geq \xi_n}\bigg|\int_{\IId_\bk}f_\bk(\bx)\rho_\bk(\bx)\rd \bx\bigg| 	&  \leq  Ce^{-2a(1-\varepsilon)\log n}\|f\|_{W^s_{p,G}(\RRd,\gamma)}	\\
		&	
		\leq C n^{-a}  (\log n)^b\|f\|_{W^s_{p,G}(\RRd,\gamma)}.
\end{aligned}
\end{equation}
From \eqref{eq:first-term}, \eqref{eq:decomposition}, and \eqref{eq:second-term} statement  \eqref{IntError} is proved. Now the upper bound follows by Theorem \ref{thm:help}. 
	\hfill
\end{proof}

\begin{proof}[Proof of Theorem \ref{thm:main2}] The lower bound has already been established in \eqref{eq:lower0}. Proof of the upper bound follows similarly to that of Theorem \ref{thm:main1}. We explain why the result can be extended to the case $1 < p < \infty$. Let $1<\tau<\tau'<p'<p<\infty$. Arguing as in \eqref{eq:bar{s}} we obtain
	\begin{equation*} \label{eq:estimate1}
		\|f_\bk(\cdot+\bk)\|_{W_p^{\bar{s}}(\IId)}
		\leq  Ce^{\frac{|\bk|^2}{2p'}}\|f\|_{\Wpgamma}.
	\end{equation*}
Using Lemma \ref{lem:estimate}, with $\balpha\in \NN_0^d$ such that $|\balpha|_\infty\leq \bar{s}$ and $e\subset \{1,\ldots,d\}$, $e\not =\emptyset$, as in  \eqref{eq:quasi-norm1} we get
\begin{equation*}
	\begin{aligned}
		&\int_{\II^{|e|}}\int_{\IId}
		\frac{
			\big| \Delta^e_{\by_e+\bk_e}D^\balpha f_\bk(\bx+\bk)  \big|^p
		}{
			\prod_{j\in e}	|x_j - y_j|^{1+\tilde{s}p}
		}
		\rd\bx	\rd\by_e
		\\
		&\leq  C_{t_0}\int_{\II^{|e|}_{\bk_e}}\int_{\IId_\bk}
			\big| \Delta^e_{\by_e}D^\balpha f_\bk(\bx) \big|^p e^{-\frac{|\bx|^2+|\by_e|^2}{4} \big(1-\frac{ 1-e^{-t_0}}{ 1+e^{-t_0}}\big)}\bigg(\prod_{i\in e}K_{p\tilde{s}}(x_i,y_i)\bigg)
		\rd\bx	\rd\by_e
		\\
	&\leq  C_{t_0}\int_{\II^{|e|}_{\bk_e}}\int_{\IId_\bk} 
\big| \Delta^e_{\by_e}D^\balpha f_\bk(\bx) \big|^p e^{\frac{|\bx|^2+|\by_e|^2}{4} \big(1+\frac{ 1-e^{-t_0}}{ 1+e^{-t_0}}\big)}\bigg(\prod_{i\in e}K_{p\tilde{s}}(x_i,y_i)\bigg)
\rd\gamma(\bx)	\rd\gamma(\by_e)
		\\
		&\leq   C_{t_0}  e^{\frac{p|\bk|^2}{2p'} \big(1+\frac{ 1-e^{-t_0}}{ 1+e^{-t_0}}\big)} \|f\|_{\Wpgamma}^p.
	\end{aligned}
\end{equation*}
Therefore we find that
$$
\|f_\bk(\cdot+\bk)\|_{\Wpmix}\leq  C e^{\frac{|\bk|^2}{2p'} \big(1+\frac{ 1-e^{-t_0}}{ 1+e^{-t_0}}\big)} \|f\|_{\Wpgamma}
$$	
which implies
 \begin{equation*}  
	\begin{aligned} 
		\|f_{\bk}(\cdot+\bk)\rho_{\bk}(\cdot+\bk)\|_{\Wpmix} 
		& 
		\leq C \|f_{\bk}(\cdot+\bk)\|_{\Wpmix}  \cdot \|\rho_{\bk}(\cdot+\bk)\|_{\Wpmix}
		\\
		& \leq C e^{\frac{|\bk|^2}{2p'} \big(1+\frac{ 1-e^{-t_0}}{ 1+e^{-t_0}}\big)-\frac{|\bk|^2}{2\tau'}}\|f\|_{\Wpgamma}.
	\end{aligned}
\end{equation*}	
Now we choose $t_0$ such that $$\frac{1}{2p'}\bigg(1+\frac{ 1-e^{-t_0}}{ 1+e^{-t_0}}\bigg)<\frac{1}{2\tau'}
$$
and $\delta>0$ so that
\begin{equation*}   
	\max \bigg\{e^{-\frac{p|\bk|^2}{2p'}\big(1-\frac{1}{p}\big)},
	e^{\frac{|\bk|^2}{2p'} \big(1+\frac{ 1-e^{-t_0}}{ 1+e^{-t_0}}\big)-\frac{|\bk|^2}{2\tau'}}\bigg\}
	\leq 
	C e^{-\delta |\bk|^2}.
\end{equation*}
Next step is the same as in the proof of Theorem \ref{thm:main1}. The second statement is a consequence of the first one, Lemma \ref{lem:embedding2} and \eqref{DK}, \eqref{N-eq}.  
The proof is completed.\hfill
\end{proof}
\section{Numerical experiments}\label{Sec-4}

In this section we run numerical experiment for the cases $d=1$ and $s\in \{{1.1, 1.8,2.7}\}$. We consider the algorithm for the Hermite spaces $\mathcal{H}^s(\RR,\gamma)={W}^s_2(\RR,\gamma)$. 
Notice that for $s >1$, $\mathcal{H}^s(\RR,\gamma)$ is a separable reproducing kernel Hilbert space with the reproducing kernel
\begin{equation}\label{RK}
	K_s(x,y)= \sum_{k \in \NN_0} \frac{1}{(k+1)^s} H_k(x)H_k(y).
\end{equation}
Assume that we have the quadrature 
\begin{equation*}  
	I_n(f) := \sum_{i=1}^n \lambda_{i}  f(x_i),\ \ n\in \NN,
\end{equation*}
to approximate the integral of $f\in \mathcal{H}^s(\RR,\gamma)$. Then the error of this quadrature  is given by
\begin{equation}\label{eq:err}
	 	\begin{aligned}
{{\rm err}}&:=\sup_{f\in \boldsymbol{\mathcal{H}}^s(\RR,\gamma)}|I^\gamma(f)-I_n(f)|= \int_{\mathbb{R}} \int_{\mathbb{R}} K_s\left(x, y\right) \rho(x) \rho(y) \mathrm{d} x \mathrm{d} y \\
	 	&\ \quad -2 \sum_{i=1}^{n} \lambda_{i} \int_{\mathbb{R}} K_s(x, x_{i}) \rho(x) \mathrm{d} x+\sum_{i, j=1}^{n} \lambda_{i} \lambda_{j} K_s\left(x_{i}, x_{j}\right),
\end{aligned}
\end{equation}
see, e.g., \cite[Section 4]{DILP18}. By using the representation \eqref{RK} and a straightforward computation, we obtain
$$
{{\rm err}}=\Bigg(\bigg(1-\sum_{i=1}^n \lambda_i\bigg)^2+\sum_{k=1}^\infty \frac{1}{(1+k)^s}\bigg(\sum_{i=1}^n\lambda_iH_k(x_i)\bigg)^2\Bigg)^{1/2},
$$

For the numerical integration of functions in $\mathring{W}^s_2(\II)$, we employ equidistant nodes with equal weights.  We then apply  the method of change of variable
\begin{equation*}
	W^s_2(\II) \to \mathring{W}^s_2(\II): \ f\mapsto \psi' f(\psi),
\end{equation*} where 
\begin{equation*}\label{psin}
	\psi(x) = \begin{cases}
		C\int_{0}^x  (\frac{1}{4}-\xi^2)^4\,\rd\xi&\text{if} \ t\in [-\frac{1}{2},\frac{1}{2}],\\ 
		\frac{1}{2},&\text{if}\ t>\frac{1}{2},\\ 
		-\frac{1}{2} ,&\text{if}\ t<-\frac{1}{2}\,,
\end{cases}
\end{equation*}
and $C=\big(\int_{-1/2}^{1/2} (\frac{1}{4}-\xi^2)^4 \,\rd\xi\big)^{-1}$ to get asymptotically optimal integration nodes and weights for functions in $W^s_2(\II)$.

To form the quadrature $I_n(f)$ for $f\in \mathcal{H}^s(\RR,\gamma)$ we use assemble method described in Section \ref{sec:numer}. 
For the numerical computation the error \eqref{eq:err} is replaced by the  truncated version
\begin{equation}\label{eq:errorm}
{{\rm err}}_m=\Bigg(\bigg(1-\sum_{i=1}^n \lambda_i\bigg)^2+\sum_{k=1}^{m} \frac{1}{(k+1)^s}\bigg(\sum_{i=1}^n\lambda_iH_k(x_i)\bigg)^2\Bigg)^{1/2}.
\end{equation}

In our experiments, we set $m = 3 \times 10^4$. The results, presented in Figure~\ref{Fig-1}, show that the worst-case integration error for functions in the Hermite spaces $\mathcal{H}^s(\RR,\gamma)$ with $s \in \{1.1; 1.7; 2.8\}$ exhibits a convergence rate of $\mathcal{O}(n^{-s})$. These numerical findings are in agreement with the theoretical results established in this paper.

In this section, we also present numerical experiments illustrating the behavior of the error estimate \eqref{eq:errorm} for $\tfrac{1}{2} < s \leq 1$. Figure~\ref{Fig-2} displays the convergence of ${\rm err}_m$ for $s \in {0.6, 0.8, 1}$. It is noteworthy that the convergence rate $O(n^{-s})$ of ${\rm err}_m$ is already clearly observed even for relatively small numbers of sampling points. This observation is in good agreement with the theoretical findings of Theorem~\ref{thm:main2} for Hermite spaces.
\begin{figure}
	\includegraphics[height=9.2cm]{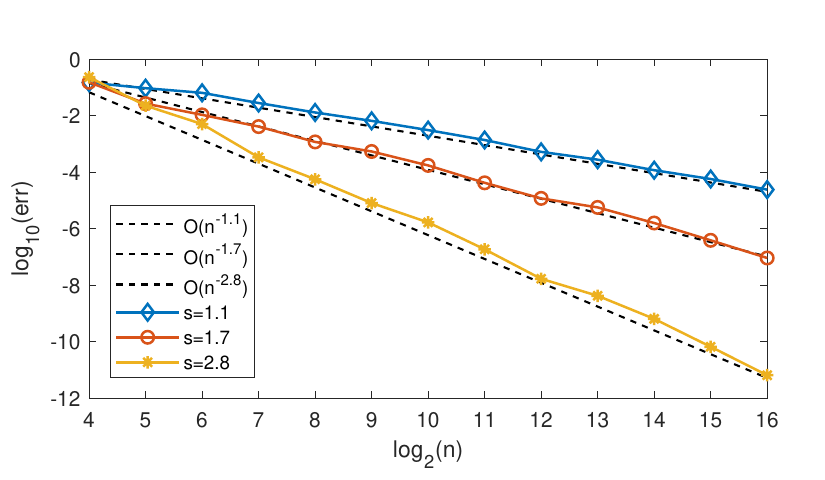}
	\caption{Worst-case error of numerical integration for functions in Hermite spaces}
	\label{Fig-1}
\end{figure}

\begin{figure}
	\includegraphics[height=9.2cm]{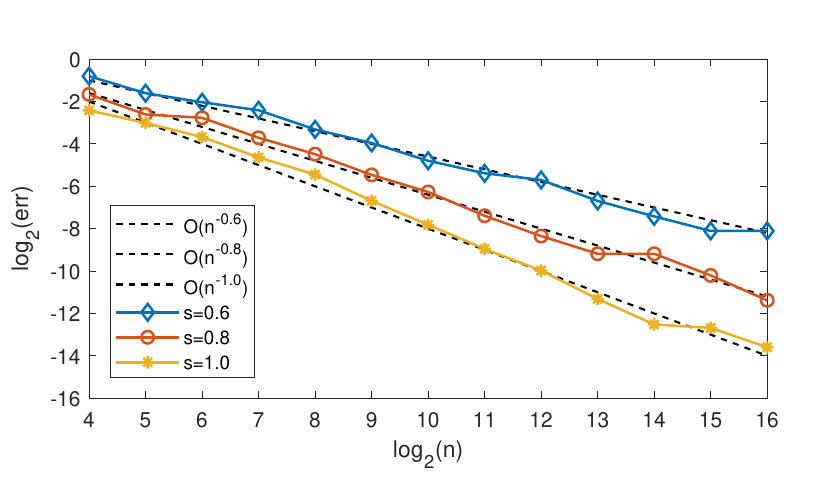}
	\caption{Worst-case error of numerical integration for functions in Hermite spaces with smoothness less than $1$}
	\label{Fig-2}
\end{figure}
\noindent
{\bf Acknowledgments:} A part of this work was done when  the author was working at the Vietnam Institute for Advanced Study in Mathematics (VIASM). He would like to thank  the VIASM for providing a fruitful research environment and working condition. 
\bibliographystyle{abbrv}
\bibliography{AllBib}

\begin{thebibliography}{10}

\bibitem{Bakhvalov1963}
N.~S. Bakhvalov.
\newblock Optimal convergence bounds for quadrature processes and integration
  methods of monte carlo type for classes of functions.
\newblock {\em Zhurnal Vychislitel'noi Matematiki i Matematicheskoi Fiziki},
  4:5--63, 1963.

\bibitem{CCMP22}
A.~Carbotti, S.~Cito, D.~A.~L. Manna, and D.~Pallara.
\newblock {Gamma convergence of Gaussian fractional perimeter}.
\newblock {\em Adv. Calc. Var.}, 16:571--595, 2023.

\bibitem{DN23}
D.~{D\~ung} and V.~K. Nguyen.
\newblock {Optimal numerical integration and approximation of functions on
  $\mathbb{R}^d$ equipped with Gaussian measure}.
\newblock {\em IMA J. Numer. Anal.}, 44:1242--–1267, 2024.

\bibitem{DTU18B}
D.~{D\~ung}, V.~N. Temlyakov, and T.~Ullrich.
\newblock {\em {Hyperbolic Cross Approximation}}.
\newblock Advanced Courses in Mathematics - CRM Barcelona,
  Birkh\"auser/Springer, 2018.

\bibitem{DILP18}
J.~Dick, C.~Irrgeher, G.~Leobacher, and F.~Pillichshammer.
\newblock {On the optimal order of integration in Hermite spaces with finite
  smoothness}.
\newblock {\em SIAM J. Numer. Anal.}, 56:684--707, 2018.

\bibitem{DP2010}
J.~Dick and F.~Pillichshammer.
\newblock {\em Digital Nets and Sequences: Discrepancy Theory and Quasi-Monte
  Carlo Integration}.
\newblock Cambridge University Press, Cambridge, 2010.

\bibitem{Du93}
V.~Dubinin.
\newblock Cubature formulas for classes of functions with bounded mixed
  difference.
\newblock {\em Sbornik: Mathematics}, 76:283--292, 1993.

\bibitem{Du97}
V.~Dubinin.
\newblock Cubature formulae for {B}esov classes.
\newblock {\em Izvestiya: Mathematics}, 61:259--283, 1997.

\bibitem{DU15}
D.~Dung and T.~Ullrich.
\newblock Lower bounds for the integration error for multivariate functions
  with mixed smoothness and optimal fibonacci cubature for functions on the
  square.
\newblock {\em Math. Nachr.}, 288:743--762, 2015.

\bibitem{Fro76}
K.~K. Frolov.
\newblock Upper error bounds for quadrature formulas on function classes.
\newblock {\em Doklady Akademii Nauk SSSR}, 231:818--821, 1976.

\bibitem{HV09}
M.~Hansen and J.~Vybiral.
\newblock The jawerth--franke embedding of spaces of dominating mixed
  smoothness.
\newblock {\em Georgian Math. J.}, 16(4):667--682, 2009.

\bibitem{Hil10}
A.~Hinrichs.
\newblock Discrepancy of {H}ammersley points in {B}esov spaces of dominating
  mixed smoothness.
\newblock {\em Math. Nachr.}, 283:478--488, 2010.

\bibitem{HMOU14}
A.~Hinrichs, L.~Markhasin, J.~Oettershagen, and T.~Ullrich.
\newblock Optimal quasi-monte carlo rules on order 2 digital nets for the
  numerical integration of multivariate periodic functions.
\newblock {\em Numer. Math.}, 134:163--196, 2014.

\bibitem{Hl62}
E.~Hlawka.
\newblock {Zur angen\"aherten {B}erechnung mehrfacher {I}ntegrale}.
\newblock {\em Monatsh. Math.}, 66:140--151, 1962.

\bibitem{IKLP2015}
C.~Irrgeher, P.~Kritzer, G.~Leobacher, and F.~Pillichshammer.
\newblock {Integration in Hermite spaces of analytic functions}.
\newblock {\em J. Complexity}, 31:308--404, 2015.

\bibitem{IL2015}
C.~Irrgeher and G.~Leobacher.
\newblock {High-dimensional integration on the $\mathbb{R}^d$, weighted
  Hermitespaces, and orthogonal transforms}.
\newblock {\em J. Complexity}, 31:174--205, 2015.

\bibitem{Ko59}
N.~M. Korobov.
\newblock {Approximate evaluation of repeated integrals}.
\newblock {\em Dokl. Akad. Nauk SSSR}, 124, 1959.

\bibitem{KrNo15}
D.~Krieg and E.~Novak.
\newblock A universal algorithm for multivariate integration.
\newblock {\em Found. Comput. Math.}, 17:895--916, 2016.

\bibitem{LMP20}
A.~Lunardi, G.~Metafune, and D.~Pallara.
\newblock {The Ornstein-Uhlenbeck semigroup in finite dimensions}.
\newblock {\em Philos. Trans. R. Soc. Lond. Ser. A Math. Phys. Eng. Sci.}, 378,
  2020.

\bibitem{Ma13}
L.~Markhasin.
\newblock Discrepancy and integration in function spaces with dominating mixed
  smoothness.
\newblock {\em Diss. Math.}, 494:1--81, 2013.

\bibitem{NgS17}
V.~K. Nguyen and W.~Sickel.
\newblock {Pointwise multipliers for Sobolev and Besov spaces of dominating
  mixed smoothness}.
\newblock {\em J. Math. Anal. Appl.}, 452:62--90, 2017.

\bibitem{NUU17}
V.~K. Nguyen, M.~Ullrich, and T.~Ullrich.
\newblock Change of variable in spaces of mixed smoothness and numerical
  integration of multivariate functions on the unit cube.
\newblock {\em Constr. Approx.}, 46:69--108, 2017.

\bibitem{NoWo10}
E.~Novak and H.~Wo{\'z}niakowski.
\newblock {\em {Tractability of Multivariate Problems, Volume II: Standard
  Information for Functionals}}.
\newblock EMS Tracts in Mathematics, Vol. 12, Eur. Math. Soc. Publ. House,
  Z\"urich, 2010.

\bibitem{ST87B}
H.~Schmeisser and H.~Triebel.
\newblock {\em {Topics in Fourier Analysis and Function Spaces}}.
\newblock Chichester; New York : Wiley, 1987.

\bibitem{Szego1939}
G.~Szeg\"o.
\newblock {\em {Orthogonal Polynomials}}.
\newblock Amer. Math. Soc., 1939.

\bibitem{Tem86b}
V.~N. Temlyakov.
\newblock {Approximation of functions with bounded mixed derivative}.
\newblock {\em Trudy MIAN}, 178:1--112, 1986.

\bibitem{Tem1990}
V.~N. Temlyakov.
\newblock {On a way of obtaining lower estimates for the errors of quadrature
  formulas}.
\newblock {\em Matem. Sb.}, pages 1403--1413, 1990.

\bibitem{Tem1991}
V.~N. Temlyakov.
\newblock {Error estimates for Fibonacci quadrature formulas for classes of
  functions with a bounded mixed derivative}.
\newblock {\em Trudy Mat. Inst. Steklov}, 200:327--335, 1991.

\bibitem{Tem93B}
V.~N. Temlyakov.
\newblock {\em {Approximation of Periodic Functions}}.
\newblock Computational Mathematics and Analysis Series, Nova Science
  Publishers, Inc., Commack, NY., 1993.

\bibitem{Tem03}
V.~N. Temlyakov.
\newblock {Cubature formulas, discrepancy, and nonlinear approximation}.
\newblock {\em J. Complexity}, 19, 2003.

\bibitem{Tr10b}
H.~Triebel.
\newblock {\em {Bases in Function Spaces, Sampling, Discrepancy, Numerical
  Integration}}.
\newblock EMS Tracts in Mathematics, European Mathematical Society (EMS),
  Z\"urich, 2010.

\bibitem{UU2015}
M.~{U}llrich and T.~{U}llrich.
\newblock {T}he role of {F}rolov's cubature formula for functions with bounded
  mixed derivative.
\newblock {\em SIAM J. Numer. Anal.}, 54, No. 2:969--993, 2016.

\end{thebibliography}

\end{document}